\numberwithin{table}{section}
\newtheorem{theorem}{Theorem}[section]
\newtheorem{lemma}[theorem]{Lemma}
\newfont{\mate}{msbm10}
\newfont{\matt}{msam10}
\newcommand{\GAP}{{\sf GAP}}
\newcommand{\F}[1]{\hbox{\mate F}$\!_{#1}$}
\newcommand{\Z}{{\mate Z}}
\newcommand{\Fp}{\hbox{{\mate F}$\!_p$}}
\newcommand{\A}{{\mathcal A}}
\renewcommand{\P}{{\mathcal P}}
\newcommand{\R}{{\mathcal R}}
\newcommand{\X}{{\mathcal X}}
\renewcommand{\L}{{\mathcal L}}
\newcommand{\epim}{{\matt \symbol{16}}}
\newcommand{\pcpG}{\{ \A \mid \R \}}
\newcommand{\pcphG}{\{ \hat{\A} \mid \hat{\R} \}}
\newcommand{\pcptG}{\{ \tilde{\A} \mid \tilde{\R} \}}
\begin{document}

\title{A Finite Soluble Quotient Algorithm}
\author[Alice C.\ Niemeyer]{Alice C.\ Niemeyer}
\address{        Department of Mathematics\\
        University of Western Australia,
        Nedlands, WA 6009,
        Australia}

\maketitle

\begin{abstract}
An algorithm for  computing  power conjugate presentations for  finite
soluble  quotients  of predetermined structure  of finitely  presented
groups  is described.   Practical aspects  of  an  implementation  are
discussed. 
\end{abstract}

\section{Introduction}
 
Polycyclic  groups are  characterised  by the fact  that  they  have a
polycyclic  series,  which  is a descending series of subgroups,  such
that  each one  is  normal in the previous  one and their quotient  is
cyclic  (see Segal, 1983, or Sims,  1994).   Polycyclic  presentations
describe polycyclic groups by exhibiting  a polycyclic series  of  the
group.  They are very important  for computing in the group since they
allow the practical computation (by  collection) of a normal  word for
every element in the group.  Algorithms  using  such  descriptions for
finite polycyclic groups are, for  example,  described in Laue {\it et
al.}  (1984) and form  an  integral  part of  the  computational group
theory systems {\sc Magma} (see Bosma and Cannon,  1993) and {\sf GAP}
(Sch\"onert {\it et al.,} 1994).
 
Every polycyclic  group is soluble and  every finite  soluble group is
polycyclic. However, not every  soluble group is polycyclic.  Baumslag,
Cannonito, and  Miller  (1981a,  1981b)  describe an  algorithm  which
decides  whether a soluble  group  given by  a finite presentation  is
polycyclic.  It   has been partly implemented  by  Sims  (1990).  Here
attention is   focused   on  finite   soluble   groups.     Polycyclic
presentations for finite   soluble  groups are better known   as power
conjugate presentations.
 
The task  of a finite soluble  quotient algorithm is to  compute power
conjugate presentations   for  finite   soluble groups   described  as
quotients of finitely presented   groups.  A number of proposals   for
finite  soluble  quotient algorithms have  been  made, for instance by
Wamsley (1977),  by Leedham-Green (1984) and by  Plesken  (1987).  The
last algorithm has been  implemented by Wegner (1992).
 
Algorithms for computing power conjugate presentations for  $p$-groups
or for nilpotent groups described  as quotients  of finitely presented
groups exist. For $p$-groups see for example Havas and  Newman  (1980)
or Celler {\it  et al.\  }(1993)  and for nilpotent  groups  see  Sims
(1994) or Nickel (submitted).
 
Here a  new finite soluble quotient algorithm is  presented in detail.
For a brief description  of the  algorithm  see Niemeyer (to  appear).
New  features  are the use of vector enumeration and the  intermediate
presentations considered. Leedham-Green (1984) suggested the  use of a
vector enumerator in this context.
 
We  now turn  to  the background required for  the  description of the
algorithm. Let $G$ be a finite soluble group and let  $G = G_0 \ge G_1
\ge \cdots \ge  G_n = \{ 1 \}$  be  a composition series for $G$  with
factors of prime order.  Choose elements $a_i \in G$  for $1 \le i \le
n$ such that  $G_{i-1} = \langle G_i, a_i  \rangle;$ let $p_i$ be  the
order of the  factor $G_{i-1}/G_i.$ Then $\A = \{ a_1,\ldots, a_n  \}$
is a  generating set for  $G.$  Choose words $v_{jk}$ in the  elements
$a_{j+1}, \ldots, a_n$ for $1 \le j \le k \le n$ such that $a_i^{p_i}=
v_{ii}$ for $1 \le i \le n$ and  $a_k^{a_j}= v_{jk}$ for $ 1\le j<k\le
n$ and let $\R$ be the set consisting of these  relations.   Then $\R$
is a defining set of relations for $G.$ The  presentation $\pcpG$ is a
{\it  power  conjugate  presentation}  for  $G.$  A   power  conjugate
presentation $\pcpG$ of  a group $G$  {\it  exhibits} the  composition
series $G = G_0 \ge G_1 \ge \cdots \ge G_n = \langle 1 \rangle,$ where
$G_{i-1} = \langle a_i,\ldots,  a_n \rangle$ for  $1 \le i \le n.$ The
order  of  $G$  is  at  most $\prod_{i=1}^n p_i$ and therefore $G$  is
finite.  A  word $w(a_1, \ldots,  a_n)$  in  the  generators  is  {\it
normal\/}  if it is of the form $a_1^{e_1}\cdots a_n^{e_n}$ with $0\le
e_i < p_i.$ Note that we only consider words  in the elements of $\A,$
that is these words do not contain inverses of the generators.
 
In what follows ``word'' means semigroup word.  A  normal  word $u$ in
$\A$ is {\it equivalent} to a word $w$ in $\A$ if  $u$ and $w$ are the
same  element  of  the  group  defined  by  $\pcpG.$  The  fundamental
importance  of   power  conjugate   presentations   arises  from   the
observation that, given  a word $w$ in  the generators $\A,$ the power
conjugate presentation $\pcpG$  can be used  to compute an  equivalent
normal  word.   It  is assumed  that  the  words  $v_{jk}$ in  a power
conjugate presentation  are normal.  If  the right hand  side of  some
relation is the identity then the relation is written as a relator  by
just listing the left hand side.
 
If a  word is not normal it  has a non-normal  subword minimal  in the
partially ordered set of non-normal words. This subword is of the form
$a_i^{p}$ or of the form $a_k  a_j$ with $k > j$.  {\it Collection} of
a word consists of a sequence of steps each of which chooses a minimal
non-normal subword and replaces it.  A subword of the form $a_i^p$  is
replaced  by $v_{ii}$ and $a_k {a_j}$ is replaced by $a_j v_{jk}.$ For
normal words the sequence of collection steps is  empty.  Otherwise  a
minimal non-normal subword is  chosen and replaced.  Each further step
is  applied  to  the result of the previous step.  The words resulting
from these steps are equivalent to  the given word.  For an account of
collection   see   for  example  Havas   and   Nicholson   (1976),  or
Leedham-Green and Soicher (1990).

A  fundamental  result is that  every  collection (independent  of the
choices of minimal non-normal subwords) of a non-normal  word  results
in a normal word after a finite number of steps (see for example Sims,
1994).  A  normal word resulting from collecting the word $w$ will  be
denoted ($w$);  it may depend upon  the choices  made in  the process.
Multiplication  of  two elements of $G$ amounts to computing  a normal
word for the product given by concatenation.

In  general, there  may  exist many normal words representing  a given
group  element.  If each  element is represented  by a  unique  normal
word, then the power  conjugate presentation  is {\it consistent}.  In
this case two group elements are equal only if they are represented by
the  same normal  word.  For  the  finite soluble  group $G,$ given as
above, the order is then equal to $\prod_{i=1}^n p_i.$

The following  result is due to  Wamsley (1977).  In summary it states
that a power  conjugate  presentation is  consistent if certain words,
called  {\it   consistency  test   words},   can   be   collected   in
``sufficiently different'' ways and still yield the same normal word.
These consistency test words are $ \vphantom{a_k^{p-1}}a_k^{}\,
a_j^{} a_i^{}$ with $ 1 \le i < j < k \le n,$
$a_k^p a_j^{},$ with $1 \le j < k \le n,$ 
$a_j a_i^{p},$ with $1 \le i < j \le n,$ and
$ a_i^{p+1}$ with $1 \le i \le n.$

\begin{theorem}
\label{consis}
Let $G$ be a finite soluble group given by the power conjugate
presentation $\pcpG.$
Then the presentation $\pcpG$   is consistent if and only if the following
equations hold: 
$$\vbox{\openup 3pt
\halign{\hfil$#$\hfil &\quad{\it #}\hfil& \hskip .5em$#$\hfil\cr
\bigl(\vphantom{a_k^{p-1}}(a_k^{}\, a_j^{})\, a_i^{}\bigr) \, 
= \bigl( \vphantom{a_k^{p-1}}a_k^{}\,(a_j^{}\, a_i^{})\bigr) & for
& 1 \le i < j < k \le n,\cr
\bigl(\vphantom{a_k^{p-1}}(a_k^p)\, a_j^{} \bigr)\, = \bigl( a_k^{p-1}\,
(a_k^{}\, a_j^{})\bigr) & for 
&1 \le j < k \le n,\cr
\bigl(\vphantom{a_k^{p-1}}(a_j^{}\,a_i^{})\,a_i^{p-1} \bigr)\,
= \bigl(\vphantom{a_k^{p-1}} a_j^{}\,(a_i^p)\bigr) & for 
&1 \le i < j \le n,\cr 
\bigl(\vphantom{a_k^{p-1}} (a_i^p)\, a_i^{}\bigr)\, = \bigl( 
\vphantom{a_k^{p-1}}a_i^{}\, 
(a_i^p)\bigr) & for &  1 \le i \le n.\cr}}$$
\end{theorem}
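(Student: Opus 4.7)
The plan is to view collection as a string rewriting system on semigroup words over $\A$ and to apply Newman's diamond lemma. The rewriting rules are exactly the two families
$$a_i^{p_i}\longrightarrow v_{ii},\qquad a_k a_j\longrightarrow a_j v_{jk}\quad(k>j),$$
whose left hand sides are the minimal non-normal subwords. A word is irreducible under this system precisely when it is normal, so collection is the process of reducing a word to an irreducible form. Termination (strong normalisation) is exactly the fundamental result already cited from Sims (1994): every collection sequence halts after finitely many steps, regardless of the choices made.

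The necessity direction is immediate. If $\pcpG$ is consistent, then every word has a unique normal form, so the two bracketings on each side of the four displayed equations must collect to the same normal word. For sufficiency, by Newman's lemma, termination plus local confluence implies confluence, and confluence plus termination gives unique normal forms, which is exactly the definition of consistency. So the task reduces to proving local confluence, i.e.\ resolving all critical pairs.

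The heart of the proof is therefore the enumeration of critical pairs, which arise from overlaps between left hand sides of the rewriting rules. I would carry out the case analysis on the patterns $a_i^{p_i}$ and $a_k a_j$ with $k>j$. Two occurrences of $a_k a_j$-style rules overlap only when they share a single generator, yielding the pattern $a_k a_j a_i$ with $k>j>i$. A power rule overlapping a conjugation rule from the left yields $a_k^{p_k} a_j$ with $k>j$; from the right it yields $a_j a_i^{p_i}$ with $j>i$. Two power rules overlap only on the same letter, giving $a_i^{p_i+1}$. These four patterns are precisely the consistency test words, and the two ways to begin collecting them correspond exactly to the two sides of the displayed equations. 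Since the hypothesis asserts that each of these critical pairs resolves to a common normal word, the system is locally confluent, and the theorem follows.

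The main obstacle is the combinatorial verification that the four overlap classes listed above are exhaustive: one must check that no subtler overlap (for example, between $a_k a_j$ and $a_j a_i$ sharing more than one letter, or nested inside $a_i^{p_i}$) can occur. This is a careful but routine inspection of how the literal left hand sides can superimpose, using the facts that all left hand sides have length at most $\max_i p_i$ and that the generators in a conjugation rule are strictly decreasing. Once this enumeration is in hand, the rest of the argument is formal from Newman's lemma.
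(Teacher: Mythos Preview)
The paper does not supply its own proof of this theorem; it attributes the result to Wamsley (1977) and states it without argument. Your approach via string rewriting and Newman's diamond lemma is correct and is in fact the standard modern proof (it is essentially the treatment in Sims (1994), which the paper cites for the termination of collection). So there is nothing to compare against, and your sketch stands on its own.

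One small refinement worth noting for your enumeration of overlaps: two copies of the power rule $a_i^{p_i}\to v_{ii}$ can overlap with overlap length $1,2,\ldots,p_i-1$, producing the ambiguous words $a_i^{2p_i-1},\ldots,a_i^{p_i+1}$, not just the single word $a_i^{p_i+1}$. In the full argument one checks that resolution of the shortest such word $a_i^{p_i+1}$ forces resolution of all the longer ones (a short induction, or an appeal to the fact that the longer words already contain a resolvable overlap as a proper subword). You have flagged exactly this kind of bookkeeping as the main obstacle, so this is consistent with your outline; just be aware that ``two power rules overlap \dots\ giving $a_i^{p_i+1}$'' understates the list of overlaps that must, in principle, be dismissed.
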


In practice we are interested in describing groups by consistent power
conjugate   presentations.    In    this   context   power   conjugate
presentations which  exhibit  a  refinement of a specific series  of a
given  group are  considered.   The  algorithm described  here can  be
viewed as a generalisation of the  $p$-quotient algorithm described by
Havas and Newman (1980)  and by Celler  {\it et al.\  }(1993). It also
has features in common with the nilpotent quotient algorithm described
by Nickel (submitted).  Let $G$ be a group and $p$ a prime.   The
prime quotient algorithm works with a series
$$G   =   {\P}^p_0(G)\ge {\P}^p_1(G)\ge\cdots\,\,\hbox  to  1.5   cm
{\hfil with\  }  {\P}^p_i(G)  = [{\P}^p_{i-1}(G),G] \bigl({\P}^p_{i-1}
(G)\bigr)^p  {\rm\ for\ } i \ge 1$$ called 
the {\it lower exponent-$p$ central series} of $G.$ If there exists an
integer $c\ge 0$ such that ${\P}^p_c(G)=\langle 1\rangle,$ then $G$ is
a  $p$-group  and  the  smallest  such  integer  is  called  the  {\it
exponent-$p$  class}  of  $G.$ It repeats a basic step which, given  a
consistent   power  conjugate  presentation  of  $G/{\mathcal  P}_{i}(G),$
computes  a  consistent  power  conjugate  presentation   of  $G/{\mathcal
P}_{i+1}(G).$

In the  next section a series  is defined  that takes the role of  the
lower  exponent-$p$ central series in  the context of  finite  soluble
groups. The  power conjugate presentations exhibiting this  series are
described.

\section{The soluble $\L$-series}

Let $G$ be a group.  Let  ${\mathcal L}  =  [(  p_1,  c_1),\ldots, (p_{k},
c_{k}) ]$ be a  list of  pairs consisting  of  a  prime, $p_i,$ and  a
non-negative  integer,  $c_i,$  with  $p_i \not=  p_{i+1}$  and  $c_i$
positive for $i < k.$ For $1 \le i \le k$ and $0 \le j \le c_i$ define
the      list      ${\mathcal      L}_{i,j}      =      [(p_1,c_1),\ldots,
(p_{i-1},c_{i-1}),(p_{i}, j) ].$ Set  ${\mathcal L}_{1,0}(G) =  G.$ For $1
\le i \le k$ and $1 \le j \le c_i$ let
$${\mathcal L}_{i,j}(G) = {\mathcal P}_{j}^{p_i}( {\mathcal L}_{i,0}(G) )$$ 
and for $1 \le i < k$ let
$${\mathcal L}_{i+1,0}(G) = {\mathcal L}_{i,c_i}(G)$$ and ${\mathcal L}(G) = {\mathcal
L}_{k,c_k}(G).$ Note that  ${\mathcal L}_{i,j}(G) \ge {\mathcal L}_{i,j+1}(G)$
holds for $j < c_i.$

\medskip 

The chain of subgroups $$G  = {\mathcal L}_{1,0}(G) \ge {\mathcal  L}_{1,1}(G)
\ge \cdots \ge  {\mathcal  L}_{1,c_1}(G) = {\mathcal L}_{2,0}(G)\ge \cdots \ge
{\mathcal  L}_{k,c_k}(G)  =  {\mathcal L}(G)  $$  is  called the  {\it  ${\mathcal
L}$-series} of $G.$ If ${\mathcal  L}(G) = \langle 1  \rangle$ then $G$ is
an  {\it  ${\mathcal  L}$-group.}  If  $c_k >0$  and $\tilde{\L}(G)  \not=
\langle 1  \rangle$ for $\tilde{\L}  = [ (p_1, c_1  ),  \ldots,  (p_k,
c_k-1)]$ then $G$ is a {\it strict $\L$-group.}

\medskip 
Note that  in the  definition  of strict $\L$-group the exponent-$p_k$
class  of  $\L_{k,0}(G)$   is  determined  but  not  necessarily   the
exponent-$p_i$ class of $\L_{i,0}(G).$ For every  finite soluble group
$G$ there exists a (not necessarily unique) list $\L$ such that $G$ is
a strict $\L$-group.

For  a  given $i$ the series ${\mathcal L}_{i,0}(G) \ge  \cdots  \ge {\mathcal
L}_{i,c_i}(G)$  is  an initial segment  of  the  lower  exponent-$p_i$
central series  of  ${\mathcal  L}_{i,0}(G)$  and ${\mathcal L}_{i,0}(G)/{\mathcal
L}_{i,c_i}(G)$ is a $p_i$-group of exponent-$p_i$ class at most $c_i.$

We use the following notation. 
$$ \L^{+p} = \left\{\,\, \vcenter{ \halign{ $ #$\hfil & \quad $#$\hfil \cr
[ (p_1, c_1),\ldots,(p_{k}, c_{k}+ 1)] 
&\hbox{\ if\ } p  = p_k, \cr
[ (p_1, c_1),\ldots,(p_{k}, c_{k}),(p,1)]
&\hbox{\ if\ } p  \not=p_k. \cr } } \right.$$
Then ${\mathcal  L}(G)/{\L}^{+p}(G)$  is an  elementary  abelian
$p$-group.   Further  
$$\L^{-p} = \left\{\,\, \vcenter{ \halign{ $ #$\hfil & \quad $#$\hfil \cr
[  (p_1,  c_1), \ldots, (p_{k-1}, c_{k-1}) ]
&\hbox{\ if\ } p  = p_k, \cr
[ (p_1,c_1), \ldots, (p_{k}, c_{k}) ]
&\hbox{\ if\ } p  \not=p_k. \cr } } \right.$$

Let  $\pcpG$ be  a power conjugate presentation for a  finite  soluble
group $H$ with  $\A = \{  a_1, \ldots, a_n \}.$ Let $d$ be the minimal
number of generators in $\A$  required  to generate  $H.$ Assume there
exists a $d$-element subset $\X$ of $\A$ such that  $\X$ generates $H$
and for each generator $a \in \A \setminus \X$  there is at least one
relation of  $\R$ having $a$ as the {\em last} generator on  the right
hand  side and occurring with exponent 1.  Choose exactly one of these
relations and call it the {\it definition} of $a.$ The  fact that this
relation is the definition of $a$ is emphasised by  using '=:' instead
of '=' in the relation.  The colon is on the same side of the relation
as the generator defined by this  relation.  The presentation  $\pcpG$
together with chosen definitions for the  generator  in  $\A\setminus
\X$ is called  {\it labelled}.  Let $G$ be a group with generating set
$\{ g_1, \ldots,g_b \}$ and $\tau$ an epimorphism of $G$ onto $H.$ For
$i=  1,  \ldots,  b$  let $w_i$  be  the  normal  word  equivalent  to
$\tau(g_i).$ If  $a\in \X$ is the {\em last} generator in at least one
$w_i$ occurring  with exponent 1 and we have chosen one such $w_i,$ we
write $\tau(g_i) =: w_i$ and call this the {\it definition of a}.  For
each $a\in \X$ there is a  maximal $k$  and a maximal $c$ such that $a
\in \L_{k,c}(H).$ We call $\tau$  a {\it labelled} epimorphism if each
generator $a \in \X$ has a  definition  $\tau(g_i) =: w_i$ and $a$  is
the  only   generator  which   occurs  in  $w_i$  and  does   lie   in
$\L_{k,c}(H).$  If  $\pcpG$ is a labelled power conjugate presentation
for the group $H$ and $\tau$ a labelled epimorphism  from $G$ to  $H,$
then every generator in $\A$ has a definition either as an image under
$\tau$ or as a relation in $\R.$ Further we can read off a preimage in
$G$ for each $a_k\in \A$ under $\tau.$ Thus we can compute a preimage in
$G$ for each $h\in H$ under $\tau.$

The following is  a labelled consistent power conjugate presentation for
$S_4,$ the symmetric group on $4$ letters:
$$\begin{tabular}{llll}
$\{\, a,\, b,\, c,\, d\,\mid\, a^2 =: c,$\\
\hphantom{$\{\,a,\,b,\,c,\,d\,\mid\,\,$}$b^{a}=\hphantom{:} b^2 c,$& $b^3,$\\
\hphantom{$\{\,a,\,b,\,c,\,d\,\mid\,\,$}$c^{a}=\hphantom{:} c,$&$c^b=:d,$&$c^2,$\\
\hphantom{$\{\,a,\,b,\,c,\,d\,\mid\,\,$}$d^{a}=\hphantom{:}cd,$&$d^b=\hphantom{:}cd,
       $&$d^c=d,$&$d^2\,\}.$\\
\end{tabular}$$

The relations $a^2 = c $ and $c^b =  d$ are the definitions of $c$ and
$d,$ respectively.  Note that this  notation implicitly  characterises
the set $\X$ as the  subset of $\A$ whose elements do not occur as the
last element  of a right hand side of  a  definition. In the  example,
$\X$ is the set $\{ a, b \}.$

Consider   the   group   $G$   having  the  finite  presentation  $$\{
x,\,y\,\mid\,x^8,\,y^3,\,(x^{-1}y)^2,\,(yx^3yx)^2  =  x^4\}$$  and let
$\L$ be  the list $[ (2,1), (3,1),  (2,1) ].$ Then the map $\tau$ from
$G$ to $S_4$  defined  by $\tau(x)  =:  a$ and $\tau(y)  =:  b$  is  a
labelled epimorphism.  The elements of $\X$ have definitions as images
of $\tau,$ whereas the other elements in $\A$ have definitions  in the
relations of the power conjugate presentation for $S_4.$

\section{The $\L$-covering group}

Let $\L$ be the  list $[ (p_1,  c_1), \ldots, (p_{k}, c_{k}) ],$ where
$c_i$ is a non-negative integer for $1 \le i \le k,$ and let  $K$ be an
$\L$-group  with generator number $d.$ Let  $F$  be  the free group of
rank $d$ and let $\theta$ be an epimorphism  of $F$ onto  $K.$ Let $p$
be a  prime  and  denote $\L^{-p}(K)$  by $P$ and  let  $F_P$ be   the
preimage in $F$ of $P$ under $\theta.$

Let $K$  be a finite strict ${\mathcal L}$-group with generator number
$d.$ A group $H$ is a \hbox{\it $p$-descendant} of $K$ if $H$ has
generator number $d$ and $H$ is an $\tilde{\L}$-group, where for some
non-negative integer $n$ 
$$\tilde{\L} = \left\{\,\, \vcenter{ \halign{ $ #$\hfil & \quad $#$\hfil \cr
[(p_1, c_1), \ldots, (p_k, c_k + n)]
&\hbox{\ if\ } p  = p_k, \cr
[(p_1,c_1), \ldots, (p_k, c_k), (p,n) ]
&\hbox{\ if\ } p  \not=p_k, \cr } } \right.$$ 
and $H/\tilde{\L}_{k,c_k}(H)$ is isomorphic to $K.$ If $H$
is a strict ${\L}^{+p}$-group, that is $n =  1,$ then  $H$ is  an {\it
immediate $p$-descendant} of $K.$
Note that if $K$ is a $p$-group these definitions are
the same as in O'Brien (1990).

\begin{theorem}
Let $K$ be  a finite strict ${\mathcal L}$-group with generator number $d$
and $p$ a prime. There exists an ${\mathcal L}^{+p}$-group  $\hat{K}$ with
generator number $d$ such that  every  immediate \hbox{$p$-descendant}
of $K$ is isomorphic to a quotient of $\hat{K}.$
\end{theorem}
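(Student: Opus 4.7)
With notation as in the paragraph preceding the statement (so $F$ is free of rank $d$, $\theta\colon F\epim K$, and $F_P$ is the preimage under $\theta$ of $P=\L^{-p}(K)$), set
$$R=\ker\theta,\qquad R^{*}=[R,\,F_P]\,R^{p},\qquad \hat K=F/R^{*}.$$
Because $R$ is normal in $F$ and $F_P$ contains $R$, both $[R,F_P]$ and $R^{p}$ are normal in $F$; hence $R^{*}$ is normal and $\hat K$ is well defined. Writing $\bar R=R/R^{*}$ and $\bar F_P=F_P/R^{*}$, the very definition of $R^{*}$ forces $\bar R^{p}=1$ and $[\bar R,\bar F_P]=1$, so $\bar R$ is an elementary abelian $p$-group central in $\bar F_P$. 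Since $\hat K$ surjects onto $\hat K/\bar R\cong K$ and $K$ requires $d$ generators, while $\hat K$ is manifestly generated by the $d$ images of the free generators of $F$, the generator number of $\hat K$ is exactly $d$.

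Next I would verify that $\hat K$ is an $\L^{+p}$-group by splitting into the two cases used to define $\L^{+p}$. If $p=p_k$, then $P=\L_{k,0}(K)$ and $\P^{p_k}_{c_k}(P)=\L_{k,c_k}(K)=\langle 1\rangle$, so $\P^{p_k}_{c_k}(\bar F_P)\le\bar R$; one further step of the lower exponent-$p_k$ central series then gives
$$\P^{p_k}_{c_k+1}(\bar F_P)=[\P^{p_k}_{c_k}(\bar F_P),\bar F_P]\,\P^{p_k}_{c_k}(\bar F_P)^{p}\le[\bar R,\bar F_P]\,\bar R^{p}=\langle 1\rangle,$$
which is $\L^{+p}(\hat K)=\langle 1\rangle$. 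If $p\ne p_k$, then $P=\langle 1\rangle$, $F_P=R$, and $\L_{k,c_k}(\hat K)\le\bar R$ is abelian of exponent $p$; hence $\P^{p}_{1}\bigl(\L_{k,c_k}(\hat K)\bigr)=\langle 1\rangle$, again giving $\L^{+p}(\hat K)=\langle 1\rangle$.

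For the universal property, let $H$ be an immediate $p$-descendant of $K$, so $H/M\cong K$ where $M$ is the bottom $\L^{+p}$-layer of $H$. Choose an epimorphism $\sigma\colon F\epim H$ lifting $\theta$: such a $\sigma$ exists because $F$ is free on $d$ generators and $H$ has generator number $d$, so a $d$-element generating set for $K$ may be lifted to one for $H$. Its kernel $S$ satisfies $S\le R$ and $R/S\cong M$. The definition of immediate $p$-descendant, applied inside $H$ exactly as in the previous paragraph, shows that $M$ is elementary abelian of exponent $p$ and lies in the centre of $\tilde P=F_P/S$. These two conditions translate into $R^{p}\le S$ and $[R,F_P]\le S$, so $R^{*}\le S$ and $\sigma$ factors through $\hat K$; thus $H$ is a quotient of $\hat K$.

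The main obstacle is the middle step: confirming that $\hat K$ is itself an $\L^{+p}$-group. One must check that enlarging the kernel from $R$ to $R^{*}$ does not disturb the intermediate terms of the $\L$-series that $\hat K$ inherits from $K$, and then that the exponent-$p$ class at the top of the series is increased by precisely one (when $p=p_k$) or that a fresh prime layer of exponent-$p$ class $1$ is attached (when $p\ne p_k$). Once this bookkeeping is done, the universal property follows immediately from the centrality and exponent-$p$ conditions built into the definition of $R^{*}$.
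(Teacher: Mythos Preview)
Your proposal is correct and follows essentially the same route as the paper: the same construction $\hat K=F/[R,F_P]R^p$ and the same lifting argument for the universal property (the paper writes $S$ for your $R^{*}$ and $\hat\theta$ for your $\sigma$). You actually go further than the paper in sketching why $\hat K$ is an $\L^{+p}$-group; the paper's own proof omits that verification and records only the generator number and the universal property.
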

\begin{proof} 
Let $F$ be the free group of rank $d$ and let $R$ be the kernel of the
epimorphism $\theta$ of $F$ onto $K.$ Let $F_P$ be the  preimage under
$\theta$ in  $F$ of $P=\L^{-p}(K).$ Define $S$ to be $[R, F_P]R^p$ and
define the group $\hat{K}$  to be  $F/S.$ Then $S \le R$ and $\hat{K}$
is a $d$-generator group.  Let  $H$  be an immediate $p$-descendant of
$K$ and let $\nu$ be an epimorphism of $H$ onto $K.$ Then there exists
an  epimorphism   $\hat{\theta}$   from   $F$   to   $H$   such   that
$\hat{\theta}\nu  =  \theta.$   Since  $R\hat{\theta}\le  \ker\nu$  it
follows  that $R\hat{\theta}$ is an elementary abelian $p$-subgroup of
$H,$ which is central in \F{P}$\hat{\theta}.$ Hence $S\hat{\theta} = [
R\hat{\theta}, F_P\hat{\theta} ] (R\hat{\theta})^p$ is the identity in
$H$ and thus $H$ is a homomorphic image of $F/S.$
\end{proof}

If  $p$ is not $p_k$ then $P$  is  the  identity  and  $F_P$  is  $R.$
Therefore $S  = [R,R]R^p$ and $R/S$ is  the  relation module of  $F/R$
(see Gruenberg, 1976).

The group  $\hat{K} =  F/S$ with  $S =  [ R,  F_P  ]  R^p$ is the {\it
$\L$-covering  group  of  $K$  with  respect  to  the  prime  $p.$}  A
consistent  power conjugate  presentation $\pcphG$ for $\hat{K}$ is an
{\it $\L$-covering presentation} of $\pcpG.$ It should always be clear
from the context which prime $p$  is chosen.   Therefore $\hat{K}$  is
called  the $\L$-covering  group of $K$ without  reference to $p.$  If
${\mathcal L}$ is the list $[ ( p, c )]$ then an ${\mathcal L}$-group $K$ is a
$p$-group  and the  $\L$-covering group  $\hat{K}$ is the $p$-covering
group $K^*$ of  $K.$ O'Brien (1990) shows  that $K^*$ is isomorphic to
$F/[R, F] R^p.$

The $\L$-covering group $F/S$ is the largest extension  of $F/R$ by an
elementary  abelian  $p$-group such  that the extension  has the  same
generator number as $F/R$ and $F_P/R$ acts trivially on the elementary
abelian $p$-group. Note that the  generator number  of $F_P/S$ may  be
larger than the generator number  of  $F_P/R.$ The $p$-covering  group
$(F_P/R)^*$  of  $F_P/R$  is the largest  extension  of $F_P/R$  by an
elementary  abelian  $p$-group which has the same generator  number as
$F_P/R$  and  $F_P/R$   acts  trivially  on  the  elementary   abelian
$p$-group.

The following theorem asserts  that the isomorphism type of  $F/S$  is
independent of the choice of the homomorphism $\theta$ from $F$ to $K$
and thereby  independent of its kernel $R.$  It is valid  also for the
case that $\L^{-p}(F/R)$ is trivial.

\begin{theorem}
Let $R_1$ and $R_2$ be normal subgroups of $F$ such
that $F/R_1$ and $F/R_2$  are $\L$-groups.\
Furthermore, let $U_1$ and $U_2$ be subgroups of $F$ such that for
$i \in \{ 1,2 \}$\\
{\rm \hphantom{iiiiii}1)} $R_i \le U_i,$\\
{\rm \hphantom{iiiiii}2)} $U_i/R_i \le \L^{-p}(F_P/R_i),$ \\
{\rm \hphantom{iiiiii}3)}  $U_i/R_i$ is characteristic in $F/R_i,$\\
\noindent
and  there  exists an isomorphism  $\varphi$ of  $F/R_1$ onto $F/R_2,$
which  maps  $U_1/R_1$  onto $U_2/R_2.$  Define  $S_i$  to  be  $[R_i,
U_i]R_i^p$ for $i \in  \{1,2\}.$ Then $F/S_1$ is isomorphic to $F/S_2$
by an isomorphism which  takes $R_1/S_1$ to $R_2/S_2$ and $U_1/S_1$ to
$U_2/S_2.$
\end{theorem}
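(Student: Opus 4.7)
The plan is to use the freeness of $F$ to lift $\varphi$ and $\varphi^{-1}$ to endomorphisms of $F$, show that these descend to homomorphisms between $F/S_1$ and $F/S_2$, and then use finiteness together with a Frattini-style argument to conclude that the induced maps are mutually inverse isomorphisms.

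First I would lift $\varphi$: choose free generators $x_1, \ldots, x_d$ of $F$ and, for each $j$, pick $f_j \in F$ with $f_j R_2 = \varphi(x_j R_1)$; then define $\hat\varphi \colon F \to F$ by $\hat\varphi(x_j) = f_j$. By construction the composition of $\hat\varphi$ with the projection $F \to F/R_2$ coincides with the composition of $F \to F/R_1$ with $\varphi$, so $\hat\varphi(R_1) \subseteq R_2$; the hypothesis $\varphi(U_1/R_1) = U_2/R_2$, together with $R_i \le U_i$, gives $\hat\varphi(U_1) \subseteq U_2$. Functoriality of commutators and of $p$-th powers under a homomorphism then yields $\hat\varphi(S_1) = \hat\varphi([R_1, U_1]R_1^p) \subseteq [R_2, U_2]R_2^p = S_2$, so $\hat\varphi$ induces a homomorphism $\bar\varphi \colon F/S_1 \to F/S_2$ mapping $R_1/S_1$ into $R_2/S_2$ and $U_1/S_1$ into $U_2/S_2$. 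The symmetric construction applied to $\varphi^{-1}$ produces a homomorphism $\bar\psi \colon F/S_2 \to F/S_1$ with analogous properties.

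The main obstacle is to prove that $\bar\varphi$ and $\bar\psi$ are inverse isomorphisms. The composition $\bar\psi\bar\varphi$ is an endomorphism of the finite group $F/S_1$ (finite because $F/R_1$ is a finite $\L$-group and $R_1/S_1$ is finitely generated of exponent $p$) whose reduction modulo $R_1/S_1$ is the identity of $F/R_1$; hence the image of $\bar\psi\bar\varphi$ together with $R_1/S_1$ generates $F/S_1$. To promote this to genuine surjectivity, and so, by the Hopfian property of finite groups, to the automorphism conclusion, it suffices to establish that $R_1/S_1$ is contained in the Frattini subgroup $\Phi(F/S_1)$. This containment reflects the position of $R_1/S_1$ as the bottom step of the $\L$-series of $F/S_1$, viewed as a strict $\L^{+p}$-group: as an elementary abelian $p$-subgroup centralised by $U_1/S_1$ and sitting at the very bottom of a soluble $\L$-series, it must consist of non-generators, in close analogy with the inclusion of the last non-trivial term of the lower exponent-$p$ central series of a finite $p$-group in its Frattini subgroup. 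With $\bar\psi\bar\varphi$ an automorphism of $F/S_1$ and, by symmetry, $\bar\varphi\bar\psi$ an automorphism of $F/S_2$, it follows that $\bar\varphi$ and $\bar\psi$ are inverse isomorphisms.

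Finally, once $\bar\varphi$ is known to be an isomorphism, the already-established inclusions $\bar\varphi(R_1/S_1) \subseteq R_2/S_2$ and $\bar\varphi(U_1/S_1) \subseteq U_2/S_2$ become equalities by order-counting: the isomorphism gives $|F/S_1| = |F/S_2|$, and $|F/R_1| = |F/R_2|$ via $\varphi$, so the restrictions are injections between equal-order finite groups, hence bijections, yielding the required compatibility with $R_i/S_i$ and $U_i/S_i$.
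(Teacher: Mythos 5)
Your first two paragraphs are sound and follow essentially the same route as the paper: lift $\varphi$ and $\varphi^{-1}$ by choosing preimages of generators, and use $\hat\varphi(R_1)\le R_2$ and $\hat\varphi(U_1)\le U_2$ to see that $S_1=[R_1,U_1]R_1^p$ is carried into $S_2$. The gap is in the surjectivity step. The claim that $R_1/S_1\le\Phi(F/S_1)$ is false in general; the analogy with the lower exponent-$p$ central series of a $p$-group breaks down precisely because the bottom layer here lives at a prime different from the ones above it, so it need not consist of non-generators. Concretely, take $F$ free of rank $1$ on $x$, $\L=[(2,1)]$, $R_1=R_2=\langle x^2\rangle$, $p=3$, and $U_i=R_i$ (which is what condition 2 forces when $P$ is trivial, a case the paper explicitly includes). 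Then $S_i=[R_i,R_i]R_i^3=\langle x^6\rangle$, so $F/S_1$ is cyclic of order $6$ with trivial Frattini subgroup, while $R_1/S_1$ is its subgroup of order $3$. Worse, the failure is not merely one of justification: $f_1=x^3$ is a legitimate lift (since $x^3R_2=xR_2=\varphi(xR_1)$), and the resulting $\bar\varphi$ is the endomorphism $x\mapsto x^3$ of the cyclic group of order $6$, whose image has order $2$. So for a bad choice of the $f_j$ your maps $\bar\varphi$ and $\bar\psi$ are genuinely not isomorphisms, and no a posteriori argument can rescue them.

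The missing ingredient is a careful choice of the lifts. Since $F/S_2$ is a finite $d$-generator group and the cosets $f_jR_2=\varphi(x_jR_1)$ generate $F/R_2$, Gasch\"utz's lemma on lifting generating sets through quotients of finite groups lets you adjust each $f_j$ by an element of $R_2$ so that $f_1S_2,\ldots,f_dS_2$ generate $F/S_2$; this does not disturb the congruences modulo $R_2$, so your verification that $S_1$ maps into $S_2$ is unaffected, and now $\bar\varphi$ is onto. Doing the same for $\bar\psi$ and comparing orders of the two finite groups shows both maps are isomorphisms, and your final order-counting paragraph then correctly upgrades the inclusions $\bar\varphi(R_1/S_1)\le R_2/S_2$ and $\bar\varphi(U_1/S_1)\le U_2/S_2$ to equalities. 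For what it is worth, the paper's own proof is equally terse at exactly this point: it picks arbitrary preimages $b_i$ and asserts that $F/S_1$ is a factor group of $F/S_2$, which likewise requires the $b_iS_1$ to generate $F/S_1$; the same careful choice of preimages is needed there too. But the specific Frattini-subgroup justification you offer is wrong and must be replaced.
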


\begin{proof}
Let  $\{  a_1, \ldots, a_d \}$  be a free generating set for  $F.$ Let
$\nu$ be the canonical epimorphism of $F/S_1$  onto $F/R_1.$ Then $\nu
\varphi$ is  an epimorphism from $F/S_1$ onto $F/R_2.$ Let $b_i \in F$
such that  $(b_iS_1) \nu\varphi = a_iR_2.$ Define a homomorphism $\rho
\!  :\!  F \rightarrow F/S_1$ mapping $a_i$ to $b_iS_1.$ The map $\rho
 \nu \varphi$ is an epimorphism from $F$ onto $F/R_2.$ Since $\rho \nu
\varphi$  agrees on each $a_i$ with the natural projection of $F$ onto
$F/R_2$  and  since the $a_i$ generate $F,$ $\rho \nu  \varphi$ is the
natural projection.  Thus $R_2\rho \nu \varphi = 1,$ and $R_2 \rho \le
\ker  \nu\varphi = R_1/S_1$  and  $U_2 \rho \nu \varphi = U_2/R_2,$ so
$U_2\rho  \le (U_2/R_2) \varphi^{-1} \nu^{-1}  =  (U_1/R_1)\nu^{-1}  =
U_1/S_1.$  Therefore  $S_2\rho  =  ([R_2,  U_2]  {R_2}^p)  \rho$  is a
subgroup of $[ R_1/S_1, U_1/S_1 ] (R_1/S_1)^p.$ Since the elements  of
$R_1/S_1$ are of order  $p$ and commute  with $U_1/S_1,$ we  have that
$S_2 \le \ker\rho.$ Hence $F/S_1$  is isomorphic to  a factor group of
$F/S_2.$ Similarly $F/S_2$ is isomorphic to a factor group of $F/S_1,$
and therefore $F/S_1 \cong F/S_2.$
\end{proof}

If  the  subgroup $U_i$ is  chosen to  be  $\L^{-p}(F_p/R_i)$ then the
theorem asserts that given a $d$-generator  $\L$-group  $G$ with  $P =
\L^{-p}(G),$ the choice of the epimorphism $\theta :  F \rightarrow G$
and thus the   choice of  $R = \ker   \theta$ does not  influence  the
isomorphism type of $F/S.$ We can also choose $U_i = R_i$ and thus the
choice of   the  epimorphism  $\theta$ also   has  no  impact  on  the
isomorphism type of $F/[R,R]R^p.$

\section{An $\L$-covering algorithm}

The  task of the  $\L$-covering  algorithm is  to determine a labelled
consistent power conjugate presentation for the $\L$-covering group of
a soluble $\L$-group $K.$

\begin{itemize}
\item The  input   of   the $\L$-covering  algorithm   is a
labelled consistent  power conjugate presentation for  a soluble
$\L$-group $K$ and a prime $p.$
\item The output is a consistent power conjugate
presentation for the soluble group $\hat{K},$ the  $\L$-covering group
of $K$ with respect to the prime $p.$ 
\end{itemize}

\noindent The $\L$-covering algorithm presented here first computes  a
finite presentation for  $\hat{K}.$ It is shown that one can define  a
normal  form  for  the elements  in  $\hat{K}$  and  that  the  finite
presentation can  be  used  like a  power  conjugate  presentation  to
compute normal forms of  elements of  $\hat{K}.$  Applying  a  theorem
which  is   a  generalisation   of  Theorem  \ref{consis}  allows  the
determination of  a module  presentation for the kernel of the natural
epimorphism  of $\hat{K}$  onto  $K.$  A  vector space basis  for this
\Fp$K$-module is computed by an algorithm,  called vector enumeration,
and  this in turn  enables the determination  of a labelled consistent
power conjugate presentation for $\hat{K}.$

The individual steps of the algorithm are illustrated by reference to
the example of the symmetric group on 4 letters, $S_4.$ A consistent
power conjugate presentation for this group was given in Section 2.
For this example let $\L$ be the list $[ (2,1), (3,1),  (2,1) ]$ and
let $p$ be the prime $2.$

\subsection{A  finite presentation for the $\L$-covering group}

A finite presentation $\pcptG$ for $\hat{K} =
F/S$ is obtained in the following way.
Let $\pcpG$ be the labelled consistent  power conjugate presentation
for the $\L$-group $K,$ where  $\A$ is the set $\{ a_1,\ldots, a_n \}$ and
$$\R = \{ a_i^{p_i}= v_{ii},\,
	a_k^{a_j}= v_{jk} \mid
1\le i\le n,\,1\le j<k\le n\}.$$
Let $P$ be $\L^{-p}(K).$ 
Assume that  $\pcpG$ is a  power conjugate presentation  for  $K$ with
respect  to a composition   series which  refines  the soluble  ${\mathcal
L}$-series.  Therefore  there exists  an $r$ such that
$\{ a_1 P, \ldots,  a_rP \}$ generates $K/P$  and $\{ a_{r+1}, \ldots,
a_n  \}$  generates $P.$
Let $s$ be the number  of relations in $\R$  which are not  definitions.
Then $s   =
{(n-1)n/{2}} + d.$  Introduce new generators $\{y_1, \ldots, y_s\}$ and
define $\tilde{\A} = \{a_1, \ldots, a_n \} \cup \{y_1, \ldots, y_s\}.$ We
obtain $\tilde{\R}$ in the following way:

\begin{enumerate}
\renewcommand{\theenumi}{\arabic{enumi}}
\item  initialise $\tilde{\R}$ to contain all relations of $\R$
which  are definitions;    
\item  modify  each non-defining relation $a_i^{p_i}  =  v_{ii}$
or $a_k^{a_j} = v_{jk}$ of $\R$ to read
$a_i^{p_i}  =  v_{ii}y_t$ or $a_k^{a_j} = v_{jk}y_t$ 
for some $t \in \{  1,  \ldots, s\},$ where different
non-defining relations  are modified by  different $y_t,$
and add the modified relation to $\tilde{\R};$ 
\item  add to $\tilde{R}$ all relations of  the form
$[y_{i}^{},y_{j}^g]=1$ for all normal $ g = w(a_1, \ldots,
a_r)$  and  $ y_{i}^p  = 1$ for $ 1 \le i,j \le s;$ 
\item  add to  $\tilde{\R}$ all  relations $y_{i}^{a_j}  = y_i^{}$
for  $j >  r$  and $1 \le i \le s.$ 
\end{enumerate}

We apply this to the example of $S_4.$ The subgroup $\L_{3,0}(S_4)$ is
a  $2$-group isomorphic to  the Klein 4-group and is  generated by $c$
and $d.$ Further, $S_4$ is generated by $a$ and $b.$ The definition of
$c$ is  the relation with left-hand side $a^2$  and the  definition of
$d$  is the  relation with  left-hand  side  $c^{b}.$  We  obtain  the
following presentation for $\hat{S_4}:$

$$\begin{tabular}{rllllr}
\multicolumn{6}{l}%
{$\tilde{\A}=\{a,\,b,\,c,\,d,\,y_1,\,y_2,\,y_3,\,y_4,\,y_5,\,y_6,\,y_7,%
\,y_8\}$ and}\\
$\tilde{\R} = \{ $ & $a^2 = c,$\\
                   & $b^{a}= b^2 c y_1,$ & $b^3=y_2,$\\
                   & $c^{a}= c y_3,$ & $c^{b}=d,$ & $c^2=y_4,$\\
                   & $d^a=cdy_5,$&$d^b=cdy_6,$&$d^c=d y_7,$&$d^2=y_8,$ \\
                   & $y_i^{c}= y_i,$&  
                     \multicolumn{3}{l}{ for $1 \le i \le 8,$ }\\
                   & $y_i^{d}= y_i,$&  
                     \multicolumn{3}{l}{ for $1\le i \le 8,$ }\\
                   & $[ y_i, y_j ^g ] = 1$& 
                     \multicolumn{3}{l}{ for $1 \le i,j \le 8$  and $g
                     \in \{ a, a b, b, b^2, ab^2 \}$ } & $\}.$\\
\end{tabular}$$

Consider the following diagram. 
\begin{center}
\setlength{\unitlength}{0.0125in}%
\begin{picture}(114,142)(15,681)
\thinlines
\put( 52,739){\circle*{2}}
\put( 52,779){\circle*{2}}
\put( 52,820){\circle*{2}}
\put(123,739){\circle*{2}}
\put(123,779){\circle*{2}}
\put(123,820){\circle*{2}}
\put( 52,709){\circle*{2}}
\put( 52,779){\vector( 1, 0){ 71}}
\put(123,779){\line(-1, 0){ 71}}
\put( 52,820){\vector( 1, 0){ 71}}
\put(123,820){\line(-1, 0){ 71}}
\put(123,820){\line( 0,-1){ 81}}
\put(123,739){\line( 0, 1){ 81}}
\put( 52,739){\vector( 1, 0){ 71}}
\put(123,739){\line(-1, 0){ 71}}
\put( 52,820){\line( 0,-1){125}}
\put( 35,814){\makebox(0,0)[lb]{\raisebox{0pt}[0pt][0pt]{\small $F$}}}
\put( 35,775){\makebox(0,0)[lb]{\raisebox{0pt}[0pt][0pt]{\small $F_P$}}}
\put(129,814){\makebox(0,0)[lb]{\raisebox{0pt}[0pt][0pt]{\small $K$}}}
\put(129,774){\makebox(0,0)[lb]{\raisebox{0pt}[0pt][0pt]{\small $P$}}}
\put( 35,735){\makebox(0,0)[lb]{\raisebox{0pt}[0pt][0pt]{\small $R$}}}
\put( 35,706){\makebox(0,0)[lb]{\raisebox{0pt}[0pt][0pt]{\small $S$}}}
\put( 12,721){\makebox(0,0)[lb]{\raisebox{0pt}[0pt][0pt]{\small $M$}}}
\put( 25,721){\makebox(0,0)[lb]{\raisebox{0pt}[0pt][0pt]{$\small \left\{
\vphantom{\line(0,1){17}}\right.$}}}
\end{picture}
\end{center}

The subgroup  $R/S,$  denoted  by $M,$ is  the  kernel of  the natural
epimorphism of $\hat{K}$ to  $K$ and can be characterised as  follows.
It is the maximal  \Fp$K$-module by which  $K$ can be extended so that
$P$  acts trivially on  $M$ and the extension has  the same  generator
number as $K.$ Thus $M$  is an \Fp$(K/P)$-module.  Let $Y$ be the free
\Fp$(K/P)$-module  on $\{ y_1,  \ldots, y_s \}.$ The  module $M$ is  a
homomorphic image of $Y.$ The kernel of the homomorphism from $Y$ onto
$M$  can be  computed effectively.  In order to see this, we study the
finite presentation for the group $\hat{K}$ in more detail.

\subsection{Collecting in $\hat{K}$}

One  can  collect in the  group  $\hat{K}$  relative to $\pcptG.$  The
definition of a normal word  can be generalised  for this presentation
in the following way.  A word in $\tilde{\A}$ is {\it normal\/} if  it
is  of the form  $w(a_1, \ldots, a_n  ) \cdot \Pi_{i=1}^s  y_i^{f_i},$
where $w( a_1, \ldots, a_n)$ is a normal  word in $\{ a_1, \ldots, a_n
\}$  and $f_i$  is an  element  of  \Fp$(K/P).$  The  following steps,
referred to as  ``collection  in $\hat{K}$'',  can be applied to every
word in $\tilde{\A}.$ For $f,\, f'  \in $ \Fp$(K/P)$ and $1 \le k,\, l
\le s$ and $ 1 \le i,\, j \le n$

\begin{enumerate}
\renewcommand{\theenumi}{\arabic{enumi}}
\item replace  $y_l^f y_k^{f'}$ by $y_k^{f'} y_l^{f}$ for $k < l;$

\item replace $y_k^f y_k^{f'}$ by $y_k^{f+f'};$

\item \begin{enumerate}
\renewcommand{\theenumi}{\alph{enumi}}
\item replace  $y_k^f a_i^{q}$ by $a_i^{} y_k^{(f a_i)}
a_i^{q-1}$ if $q > 1;$

\item replace  $y_k^f a_i^{}$ by $a_i^{} y_k^{(f a_i)}; $
\end{enumerate}

\item replace  $a_i^{p_i}$ by $v,$ where $a_i^{p_i} = v$ 
is a relation in $\tilde{\R};$

\item replace $a_j a_i$ by $a_i v,$ where $a_j^{a_i}=v$ 
is a relation in $\tilde{\R}$ for $i < j.$
\end{enumerate}

\noindent In each step a word is replaced by another word representing
the same element of $\hat{K}.$ After applying a finite number of these
steps to any word it is replaced by a normal word.  This can be proved
in a way similar  to  proving  that a  collection  process  computes a
normal word after applying finitely many collection steps.  Rules  1),
2) and 3) use the fact that $M$ is an \Fp$(K/P)$-module.  Note that 4)
and 5) resemble collection steps in a collection algorithm, where  the
power conjugate presentation is used to determine the replacement.

For example the word $b ( ba)$ in collects in $\hat{S_4}$ to 
$ a b d  y_1^{b^2+1} y_2^b y_4 y_6  y_7.$

The following lemma states that two equivalent normal words 
can differ only by a module word in $Y.$

\begin{lemma}
\label{normalform}
Let $w$ be an arbitrary word in $\{ a_1, \ldots,
a_n \} \cup \{ y_1, \ldots, y_s \}.$ Then there exists a unique normal
word $v$ in $\{ a_1, \ldots, a_n \}$ such that any normal word
in $\hat{K}$ equivalent to $w$ has the form $v\cdot \Pi_{i=1}^s y_i^{f_i}.$
\end{lemma}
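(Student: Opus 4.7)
The plan is to exploit the projection $\pi\colon \hat{K} \to K$ that sends each $a_i$ to itself and each $y_j$ to the identity. First I would check that $\pi$ is a well-defined homomorphism: every relation in $\tilde{\mathcal R}$ becomes, after setting $y_j = 1,$ either a relation in $\mathcal R$ (for the modified non-defining relations and the definitions) or the trivial identity (for the $y_i^p,$ the $y_i^{a_j}$ with $j>r,$ and the $[y_i, y_j^g]$ relations). Consequently $\pi$ descends to an epimorphism from $\hat{K}$ onto $K,$ and it sends any normal word $v \cdot \prod_{i=1}^s y_i^{f_i}$ of $\hat{K}$ to its $\A$-part $v,$ which by the definition of ``normal'' in $\hat{K}$ given just before the lemma is already a normal word in $\A.$

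For uniqueness, I would take two normal words $u_1$ and $u_2$ in $\hat{K},$ both equivalent to $w,$ with $\A$-parts $v_1$ and $v_2$ respectively. Since $u_1 = u_2$ as elements of $\hat{K},$ applying $\pi$ gives $v_1 = v_2$ as elements of $K;$ that is, $v_1$ and $v_2$ are two normal words in $\A$ representing the same element of $K.$ Because $\pcpG$ is consistent (the input presentation, by hypothesis, and justified by Theorem~\ref{consis}), every element of $K$ has exactly one normal representative, so $v_1$ and $v_2$ coincide as formal words. This defines the unique word $v.$ For existence, I would note that the collection rules 1--5 in $\hat{K}$ terminate on any input (the argument that ordinary collection terminates adapts verbatim, with rules 1--3 strictly decreasing the $y$-content's distance from the right and rules 4--5 mimicking ordinary collection steps on the $\A$-part), so at least one normal word $u = v \cdot \prod y_i^{f_i}$ equivalent to $w$ is produced, pinning down the shape asserted by the lemma.

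The step requiring most care is the well-definedness of $\pi,$ which amounts to a case check over the four families of relations in $\tilde{\mathcal R}$ listed in the construction of $\pcptG.$ A minor secondary point is to confirm that the $\A$-part of a normal word in $\hat{K}$ is normal with respect to $\mathcal R,$ which is immediate from the definition. Once $\pi$ is in hand, the lemma reduces to the consistency of the base presentation $\pcpG,$ already supplied by Theorem~\ref{consis}; no further analysis of the module structure on $M$ is needed at this stage.
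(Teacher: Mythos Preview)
Your proposal is correct and follows essentially the same idea as the paper: the uniqueness of $v$ reduces to the consistency of $\pcpG.$ The paper's own proof is a single sentence (``follows from the fact that $\pcpG$ is a consistent power conjugate presentation for $K$''), and your argument is an explicit unpacking of that sentence via the projection $\pi.$ The only organisational difference is that you construct $\pi$ first and use it to prove the lemma, whereas the paper states the lemma first and records the epimorphism (called $\phi$ there) as a consequence; these are the same underlying argument in reversed order.
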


\begin{proof}
The existence of the unique normal word $v$ follows from the fact that
$\pcpG$ is a consistent power conjugate presentation for $K.$ 
\end{proof}

A consequence of this lemma is that the  map $\phi : \hat{K} \rightarrow K$
which  maps a  word in  $\hat{K}$ to its  unique  normal  word in $\{  a_1,
\ldots, a_n \}$ is an epimorphism.

The following theorem allows  us to describe the kernel of the homomorphism
from $Y$ onto  $M=R/S$ in a manner suitable  for computation.  It considers
certain non-normal words in $\hat{K}.$

\begin{theorem}
\label{consisrels}
Let $Y$ be the  free  \Fp$(K/P)$-module on $\{ y_{1},
\ldots, y_{s}\}$ and $\pcptG$  the presentation
for  the   extension $\hat{K}$ of $K$  as defined above.
Let $W$ be the following set of consistency test words in $\{a_1,
\ldots, a_n\}:$ 
$$\vbox{\openup 3pt
\halign{\hfil$#$\hfil &\quad{\it #}\hfil& \hskip .5em$#$\hfil\cr
(\bigl(\vphantom{a_k^{p-1}}(a_k^{}\, a_j^{})\, a_i^{}\bigr) \, 
\bigl( \vphantom{a_k^{p-1}}a_k^{}\,(a_j^{}\, a_i^{})\bigr)^{-1}) & for
& 1 \le i < j < k \le n,\cr
(\bigl(\vphantom{a_k^{p-1}}(a_k^p)\, a_j^{} \bigr)\, \bigl( a_k^{p-1}\,
(a_k^{}\, a_j^{})\bigr)^{-1}) & for 
&1 \le j < k \le n,\cr
(\bigl(\vphantom{a_k^{p-1}}(a_j^{}\,a_i^{})\,a_i^{p-1} \bigr)\,
\bigl(\vphantom{a_k^{p-1}} a_j^{}\,(a_i^p)\bigr)^{-1}) & for 
&1 \le i < j \le n,\cr 
(\bigl(\vphantom{a_k^{p-1}} (a_i^p)\, a_i^{}\bigr)\, \bigl(
\vphantom{a_k^{p-1}}a_i^{}\, 
(a_i^p)\bigr)^{-1}) & for &  1 \le i \le n\cr}}$$

\noindent and  let $T$ be  the set of elements obtained  by collecting
the words in $W$ with respect to $\pcptG.$ Then $T$  consists of words
in $Y$ and $M$ is isomorphic to $Y/(T $\Fp$(K/P)).$
\end{theorem}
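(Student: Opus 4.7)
The plan proceeds in four steps. First, I verify the inclusion $T \subseteq Y$. Each $w \in W$ has the form $u_1\cdot u_2^{-1}$ where $u_1$ and $u_2$ are two collected expressions for the same element of $K$; since $\pcpG$ is consistent, Theorem~\ref{consis} guarantees that $u_1$ and $u_2$ normalise to the same normal word in $K$, so $w$ represents the identity of $K$. Applying Lemma~\ref{normalform} to $w$ viewed as a word in $\tilde\A$, the unique normal $\{a_1,\ldots,a_n\}$-part of any collected form of $w$ in $\hat K$ must be the empty word; hence collection of $w$ in $\hat K$ yields a pure module word in $Y$.

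Second, set $\psi\colon Y \to M$ to be the natural surjection $y_i \mapsto y_iS$, and write $N := T $\Fp$(K/P)$. Each rule 1)--5) of Section~4.2 replaces a subword by an equal element of $\hat K$, so the element $t \in T$ obtained by collecting $w$ satisfies $t = w$ in $\hat K$. But $w = u_1\cdot u_2^{-1}$ with $u_1$ and $u_2$ already equal as elements of $\hat K$, so $w = 1$ in $\hat K$ and $\psi(t)=0$. Since $\ker\psi$ is an \Fp$(K/P)$-submodule of $Y$, it follows that $N \subseteq \ker\psi$.

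Third, and this is the main obstacle, I obtain the reverse inclusion by constructing an auxiliary group $E$ that realises $\bar Y := Y/N$. Define $E$ as the set $K \times \bar Y$, equipped with a power-conjugate-style multiplication whose rewrite rules are those of $\tilde\R$, with each $y_t$ reinterpreted as its coset in $\bar Y$ and with $K/P$ acting on $\bar Y$ through the module structure. The essential task is to verify that this multiplication is associative: by a module-theoretic generalisation of Theorem~\ref{consis}, associativity reduces to the vanishing in $\bar Y$ of the collected consistency test words, which is precisely the defining property of $N$. Consequently $E$ is a genuine $d$-generator group of order $|K|\cdot|\bar Y|$ extending $K$ by the elementary abelian $p$-group $\bar Y$ on which $F_P$ acts trivially.

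Finally, the characterising property of the $\L$-covering group established in Section~3 yields a surjection $\hat K \twoheadrightarrow E$ whose restriction to kernels gives a surjection $M \twoheadrightarrow \bar Y$. Combined with the surjection $\bar Y \twoheadrightarrow M$ provided by Step~2, finiteness forces $M \cong \bar Y = Y/(T $\Fp$(K/P))$. The genuinely hard step is Step~3: proving the generalised Wamsley-type consistency result needed to show that $E$ is a group of the claimed order, i.e.\ that setting the collected consistency test words to zero in the module suffices for associativity.
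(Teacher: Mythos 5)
Your proposal follows essentially the same route as the paper's proof: Lemma~\ref{normalform} gives $T\subseteq Y$; the natural surjection $Y\to M$ kills $\langle T\rangle$, so $M$ is a quotient of $Y/\langle T\rangle$; one then realises $Y/\langle T\rangle$ as a genuine $d$-generator extension of $K$ on which $P$ acts trivially by writing down a power conjugate presentation and checking Wamsley's consistency conditions (the test words in the $a_i$ collect to elements of $T$, hence vanish, and the conditions involving the new module generators follow from the $\Fp(K/P)$-module structure); maximality of $M$ then forces $Y/\langle T\rangle\cong M$. The paper carries out your Step~3 exactly as you describe, constructing the presentation $\pcphG$ on a vector space basis of $Y/\langle T\rangle$ and verifying two representative consistency relations explicitly.
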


\begin{proof}
The elements  of $W$ represent  the identity element in $\hat{K}.$  By
Lemma \ref{normalform} the  elements of $T$  are words in  $Y.$ Denote
$T$\Fp$(K/P)$ by $\langle T\rangle.$ Let $\mu :  Y  \rightarrow M$  be
the  epimorphism mapping  $y_i$  in  $Y$ to $y_i$  in  $M.$  Since the
elements of  $W$ are the identity  in  $\hat{K}$  it follows that  the
elements of  $T,$ when  viewed  as elements of $Y,$  are mapped to the
identity element of $M,$ hence $\langle T \rangle \subseteq \ker \mu.$
Therefore $Y/\langle T\rangle$ has a factor module isomorphic to $M.$

We  now  define a  consistent  power  conjugate presentation  for  $K$
extended  by  $Y/\langle  T\rangle$  such  that  the  extension  is  a
$d$-generator group. Since  $M$ is the largest  \Fp$(K/P)$-module with
these properties it follows that $Y/\langle T\rangle$ is isomorphic to
$M.$

Let $\{ b_1, \ldots, b_m \}$ be a vector space basis for  $Y/\langle T
\rangle.$ Then each  element $y_i \langle T  \rangle$ can be expressed
uniquely in  the basis elements. Therefore we obtain a power conjugate
presentation $\pcphG$ for an  extension of $K$ by $Y/\langle T\rangle$
in the  following way from  the presentation $\pcptG$ where $\hat{\A}$
is $\A \cup \{ b_1, \ldots, b_m \}:$
\begin{enumerate}
\renewcommand{\theenumi}{\arabic{enumi}}
\item  replace every occurrence of an element $y_i$  on the right hand
side of a relation in $\tilde{\R}$ with left hand side a word in $\A,$
by the corresponding word for $y_i \langle T\rangle $ in the basis and
add this modified relation to $\hat{\R};$
\item add to $\hat{\R}$ the relations $b_i^{a_j} = w_i(b_1,
\ldots, b_m)$ for all $1 \le i \le m$ and $1 \le j \le r,$ where $w_i(b_1,
\ldots, b_m)$ is determined by the action of $a_j$ on $b_i;$
\item add to $\hat{\R}$ the relations $b_i^{a_j} = b_i^{}$ for
$1\le i\le m$ and $r+1 \le j \le n;$
\item add to $\hat{\R}$ the relations $b_j^{b_i} = b_j^{}$ for $1
\le i < j \le m$ and the relations $b_i^{p} = 1$ for $1\le i \le m.$
\end{enumerate}

\noindent
We now show  that $\pcphG$ is consistent.   Let $\tilde{K}$ denote the
group  defined  by  $\pcphG.$ Since the  elements  of  $W$  collect to
elements   of  $\langle  T\rangle$  they  are   the  trivial  word  in
$\tilde{K}.$

The   consistency  of   $\pcphG$  is   proved   by   applying  Theorem
\ref{consis}.  We  only need  to consider consistency relations  which
involve at  least one element  of the basis.  Any consistency relation
which  involves only basis elements holds, since the  basis is a basis
for a vector space over \Fp.

Consider the word $b_k  a_j a_i.$  Applying a collection step to $(b_k
a_j) a_i$ with respect  to  $\pcphG$  yields $a_j b_k^{a_j} a_i$ which
collects  to $a_j a_i  (b_k^{a_j})^{a_i}$ and  finally to $a_i  v_{ij}
(b_k^{a_ja_i}).$ On the other hand $b_k (  a_j a_i) $ collects to $b_k
a_i  v_{ij}$  which  in  turn  collects  to  $a_i  v_{ij}  b_k  ^{(a_i
v_{ij})}.$ Since $(b_k^{a_ja_i})$ and $b_k  ^{(a_i  v_{ij})}$ are  the
same module element they have the same normal form in the basis.

Consider the word $b_k b_j  a_i.$ Applying a collection  step to $(b_k
b_j) a_i$ with respect to $\pcphG$ yields $b_j b_k a_i$ which collects
to  $a_i b_j^{a_i}  b_k^{a_i}.$ On the  other  hand  $b_k ( b_j  a_i)$
collects  to  $b_k a_i b_j^{a_i}$  which  collects to  $a_i  b_k^{a_i}
b_j^{a_i}.$  Therefore  the  first  consistency  relation  in  Theorem
\ref{consis} holds.  Similarly one can prove that  the other relations
also  hold  and therefore  $\pcphG$  is a  consistent power  conjugate
presentation.  \end{proof}

In  collecting  the  words in  $W$  with  respect to the  presentation
$\pcptG$  we obtain  a set  $T$  which  generates the  kernel  of  the
epimorphism of the free \Fp$(K/P)$-module $Y$ onto  the module $M.$ In
the example $T$ is the set:
$$\begin{tabular}{llll}
$\{ y_2,$& $y_1^{(1+b+b^2)}y_3 y_5 y_6 y_7,$ &
     $y_3^{(a+1)},$ & $y_3 y_6 y_7^{(a+1)},$\\
    $y_3^b y_7,$ &  $y_3 y_6 y_7^{(1+b)},$ &
    $y_2^{(a+1)},$ & $y_2 y_3 y_4^{(a+1)} y_6,$ \\
\multicolumn{4}{l}{$y_1^{(1+a+b^2)} y_2 y_3^{(1+b^2)}y_5^{(1+b+b^2)}
y_6^{(1+b^2)}y_7,$} \\
$    y_5^{(1+b)} y_6 y_7,$&$ y_5^{(b+b^2)} y_6^b y_7^b,$&
$    y_4y_5y_6,$&$      y_6^{(1+a)}, $\\
\multicolumn{3}{l}{$y_2 y_3
y_4^{(1+b^2)}y_5^{(a+b)}y_6^{(1+b)}y_7^b,$} &
$y_6^{(1+b)} \}.$\\
\end{tabular}$$

In the proof of the  previous theorem it  was  assumed  that we have
\begin{enumerate}
\renewcommand{\theenumi}{\arabic{enumi}}
\item vector space basis $\{ b_1, \ldots, b_m \}$ for the
\Fp$(K/P)$-module $Y/\langle T\rangle;$ 
\item an expression in the basis for $b_i^{a_j}$ for $1 \le i
\le m$ and $1\le j \le r;$
\item an expression in the basis for $y_i\langle T\rangle$
for $1\le i\le s.$
\end{enumerate}

\noindent Where such  information  is  available  the proof  yields a
constructive   method   to   obtain  a   consistent   power  conjugate
presentation  $\pcphG$  for  $\hat{K}.$ We  now  describe an algorithm
which may be used to obtain this information.

\subsection{Computing a vector space basis for a module}

The technique of vector enumeration is used to compute a basis
for   the  \Fp$(K/P)$-module  $M$
needed to obtain a power conjugate presentation for $F/S.$
A vector enumeration algorithm is   described in
Linton   (1991) and Linton (1993).  Its use in this context has been 
suggested by Leedham-Green  (private  communication,  1991).

It is used with the following input:
\begin{enumerate}
\renewcommand{\theenumi}{\arabic{enumi}}
\item a consistent power conjugate presentation for  $K;$
\item the  set of  free generators for $Y;$ 
\item a set $T.$ 
\end{enumerate}
\noindent The output is: 
\begin{enumerate}
\renewcommand{\theenumi}{\arabic{enumi}}
\item an \Fp-basis $\{ b_1, \ldots, b_m \}$ for $M;$ 
\item the matrix action of each generator of $K/P$ in the
power conjugate presentation of $K/P$ on $M$ with respect
to the computed basis;
\item an expression in the computed basis for $y_i \langle
T\rangle$ for $1 \le i \le s.$
\end{enumerate}

This    output is used   to   obtain   a consistent  power   conjugate
presentation for the extension   $\hat{K}$  of $K$  by $M$   using the
method described in the proof of Theorem \ref{consisrels}.

We illustrate the technique  by reference to  our example.  The vector
enumerator with input $\pcpG,$ the  set  $\{ y_1, y_2, y_3,  y_4, y_5,
y_6, y_7 \}$ and $T$ as  above computes a module  basis for the module
$M.$ The basis has five elements $\{ e, f, g, h, i \}$ defined by $e =
y_1,\, f  =  y_2, \,  g = y_3,\, h  = e^a$ and $i  = e^b.$ Further the
vector enumerator gives the action of $a$ and $b$ on the module basis,
while the elements   $c$  and  $d$  act  trivially.   The  information
returned by the   vector enumerator can   be  used   to construct  the
following  consistent power     conjugate   presentation    for    the
$\L$-covering group $\hat{S_4}:$

{\small
$$\renewcommand{\tabcolsep}{.8pt}
\begin{tabular}{llllllllll}
\multicolumn{10}{l}{$\{ a,\,b,\,c,\,d,\,e,\,f,\,g,\,h,\,i,\,j \mid $}\\
$a^2=c,$\\
$b^a=b^2 c e,$&$ b^3,$\\
$c^a=c,$&$ c^b=d,$&$ c^2=f,$\\
$d^a=c d g,$&$ d^b=c d h,$&$ d^c=d g h,$&$d^2=i,$\\
$e^a=j,$&$           e^b=j,$&$     e^c=e,$&$e^d=e,$&$ e^2,$\\
$f^a=f,$&$         f^b=i,$&$     f^c=f,$&$f^d=f,$&$ f^e=f,$&$f^2,$\\
$g^a=fh,$&$       g^b=hi,$&$    g^c=g,$&$g^d=g,$&$ g^e=g,$&$g^f=g,$&$g^2,$\\
$h^a=fg,$&$  h^b=gi,$&$ h^c=h,$&$h^d=h,$&$ h^e=h,$&$h^f=h,$&$h^g=h,$&$h^2,$\\
$i^a=fg hi,$&$ i^b=fghi,$&$ i^c=i,$&$i^d=i,$&$
i^e=i,$&$i^f=i,$&$i^g=i,$&$i^h=i,$&$i^2,$\\ 
$j^a=e,$&
$j^b=efgij,$&$j^c=j,$&$j^d=j,$&$j^e=j,$&$j^f=j,$&$j^g=j,$&$j^h=j,$&$j^i=j,$
&$j^2$\}.\\ 
\end{tabular}$$
}

From the presentation we can read off that $\hat{S_4}$ has order  $2^9
3  = 1536.$ The  group  $\L^{-2}(\hat{S_4})$  has order  $2^8$  and is
generated by $\{ c, d, e, f, g,  h, i, j \}.$ It is the direct product
of the normal subgroups $\langle c, d\rangle$  and  $\langle e, j, fgi
\rangle$ of $\hat{S_4}.$ In general, the complete  preimage  $\hat{P}$
of      $P=\L^{-p}(K)$      is      a      normal     subgroup      of
$\vphantom{P_1}\L^{-p}(\hat{K}).$  It   contains  a  normal  subgroup,
namely $ \hat{P}  \cap M.$  Note that $r$  was  defined  such that the
subset $\{ a_{r+1}, \ldots, a_n\}$ of $\A$ generates $P.$ A generating
set for $\hat{P} \cap M$ is the union of the set $\{ v_{ii} \mid a^p =
v_{ii} \hbox{\ is\ a\ relation\ in\ } \hat{\R}\hbox{\ for\ } i > r \}$
and the  set  $\{  v_{ij} \mid  a_i^{a_j} = a_iv_{ij}  \hbox{\ is\  a\
relation\ in\  } \hat{\R} \hbox{\ for\  } i,j  >r \}.$ It can thus  be
obtained from the power conjugate presentation for $\hat{K}.$

The group ring \Fp$(K/P)$ in the example is  isomorphic to \F{2}$S_3.$
We can  investigate  the module structure of  $M$  as an  $S_3$-module
further.  The submodule $\hat{P} \cap M$ is the direct sum of $\langle
fi, ghi \rangle$  and  $\langle gh  \rangle.$  Its module complement $
\langle e, j, fgi \rangle$ is a direct sum of a one dimensional and  a
two dimensional module.  It has the decomposition $\langle fgi \rangle
\oplus \langle ej, efgi \rangle.$

\subsection{Obtaining a labelled presentation}

In some cases additional work is necessary to transform the consistent
power  conjugate presentation  $\pcphG$ of  $\hat{K}$  into a labelled
presentation.  It is possible that a basis vector $b_i$ does not occur
as the last element of the right hand side of a relation in $\hat{\R}$
and thus no relation can be chosen as the definition of $b_i.$ In this
case we  proceed as  follows. For each basis  vector  $b_i$  choose  a
relation which contains $b_i$ in its right hand side as the definition
of $b_i,$ ensuring that this relation is  not chosen as the definition
of any other basis vector. Assume that for the element $b_i$ the right
hand  side  of its defining  relation  has the form $$w_1(a_1, \ldots,
a_n)  \cdot w_2(b_1,  \ldots, b_{i-1})  \cdot  w_3(b_i,\ldots, b_m).$$
Define  the  element  $\tilde{b}_i$  to  be  $w_3(b_i, \ldots,  b_m).$
Obviously  $\{\tilde{b}_1,\ldots, \tilde{b}_m \}$  is  again a  vector
space basis for $M$ and  the action of the generator $a_j$ of $K/P$ on
this basis can  be computed as the action of $a_j$ on $w_3(b_i,\ldots,
b_m)$ and then  expressing the  result  in  the new basis.  A labelled
consistent power conjugate presentation for $\hat{K}$  is  obtained by
performing a base change.

The  power  conjugate presentation for   $\hat{S_4}$  given  above  is
already a labelled power conjugate presentation, where $c,\, d,\, e,\,
f,\, g,\, h,\, i$ and $j$ are defined by the relations  with left hand
sides $a^2,\, c^b,\,   b^a,\, c^2,\, d^a,\,   d^b,\,  d^2$ and  $e^a,$
respectively.  Every  extension of  $S_4$  by an    elementary abelian
$2$-group $M$ such that the  Klein 4-group acts  trivially on $M$  and
the extension has generator number $2$ is isomorphic  to a quotient of
the group defined by this presentation.

\section{A soluble quotient algorithm}

The  soluble  quotient  algorithm  presented  here  computes  a  power
conjugate presentation for a quotient $G/{\mathcal  L}(G)$ of  a  finitely
presented  group $G,$ where  the  presentation  exhibits a composition
series of the  quotient  group  which  is a refinement  of the soluble
${\mathcal L}$-series.  It takes as input:

\begin{enumerate}
\renewcommand{\theenumi}{\arabic{enumi}}
\item a finite  presentation  $\{ g_1, \ldots, g_b \mid r_1( g_1,
\ldots,  g_b), \ldots, r_m(  g_1,\ldots, g_b) \}$ for $G;$ 
\item a  list ${\mathcal L} =  [(p_1, c_1), \ldots, (p_k, c_k)],$
where each $p_i$ is a  prime,  $p_i \not=  p_{i+1},$ and each 
$c_i$ is a positive integer.
\end{enumerate}
\noindent The output is: 
\begin{enumerate}
\renewcommand{\theenumi}{\arabic{enumi}}
\item a labelled power conjugate presentation for $G/{\mathcal L}(G)$ 
exhibiting a composition series refining the soluble 
${\mathcal L}$-series of this quotient;
\item a labelled epimorphism $\tau : G$\epim$G/{\mathcal L}(G).$
\end{enumerate}

The algorithm  proceeds by computing power conjugate presentations for
the  quotients  $G/{\mathcal  L}_{i,j}(G)$   in  turn.   Without  loss  of
generality  assume  that a power conjugate presentation  for  $G/{\mathcal
L}_{i,j}(G)$ has been computed for $j < c_i.$ The basic  step computes
a power  conjugate presentation for $G/{\mathcal L}_{i,j+1}(G).$ The group
${\mathcal L}_{i,j}(G)/{\mathcal L}_{i,j+1}(G)$ is a $p_{i}$-group.  The basic
step takes as input:

\begin{enumerate}
\renewcommand{\theenumi}{\arabic{enumi}}
\item the finite presentation  for $G;$ 
\item  a  labelled consistent power conjugate presentation for the
finite soluble quotient $K \cong G/{\mathcal L}_{i,j}(G)$ of $G$ 
	  with $j < c_i$ which refines the ${\mathcal L}$-series of $K;$
\item a labelled epimorphism $\theta : G$\epim$K.$
\end{enumerate}

\noindent
The output is:
\begin{enumerate}
\renewcommand{\theenumi}{\arabic{enumi}}
\item a labelled consistent power conjugate presentation for the
finite soluble   
group $H \cong G/{\mathcal L}_{i,j+1}(G),$ exhibiting a 
composition series refining the ${\mathcal L}$-series of $H;$ 
\item an  epimorphism $\phi : H$\epim$K;$
\item a labelled epimorphism $\tau : G$\epim$H$ with $\tau \phi = \theta.$
\end{enumerate}

If during  the basic step it  is discovered that ${\mathcal L}_{i,j}(G)  =
{\mathcal  L}_{i,j+1}(G),$ then  ${\mathcal  L}_{i+1,0}(G)$  is  set to ${\mathcal
L}_{i,j}(G).$

The basic step  is illustrated  by  the  following diagram, where  the
input is  described  on the left and the output  is described  on  the
right.   Put  $p =  p_{i},$ let $P$  denote  ${\mathcal L}_{i,0}(K),$  and
$\hat{P}$  denote ${\mathcal L}_{i,0}(H).$ If $j = 0$ then $P$ is trivial.
The  elementary abelian $p$-group $\ker \phi$ is  denoted by $N.$  The
group $\hat{P}$ acts trivially on  $N;$ thus  $\hat{P}$  is  a central
extension of $P$ by $N,$ and $\hat{P}$ is a $p$-group of  exponent-$p$
class at most one larger than the exponent-$p$ class of $P.$

\begin{center}
\setlength{\unitlength}{0.0125in}%
\begin{picture}(241,147)(20,675)
\thinlines
\put( 21,740){\circle*{2}}
\put( 21,780){\circle*{2}}
\put( 21,821){\circle*{2}}
\put( 92,740){\circle*{2}}
\put( 92,780){\circle*{2}}
\put( 92,821){\circle*{2}}
\put(189,821){\circle*{2}}
\put(260,821){\circle*{2}}
\put(189,780){\circle*{2}}
\put(260,780){\circle*{2}}
\put(260,740){\circle*{2}}
\put(189,699){\circle*{2}}
\put(260,699){\circle*{2}}
\put(189,740){\circle*{2}}
\put( 21,820){\line( 0,-1){145}}
\put( 21,675){\line( 0, 1){145}}
\put( 21,780){\vector( 1, 0){ 71}}
\put( 92,780){\line(-1, 0){ 71}}
\put(260,821){\line( 0,-1){145}}
\put(260,676){\line( 0, 1){145}}
\put(189,821){\line( 0,-1){122}}
\put(189,699){\line( 0, 1){122}}
\put(260,821){\vector(-1, 0){ 71}}
\put(189,821){\line( 1, 0){ 71}}
\put(260,780){\vector(-1, 0){ 71}}
\put(189,780){\line( 1, 0){ 71}}
\put(260,740){\vector(-1, 0){ 71}}
\put(189,740){\line( 1, 0){ 71}}
\put(260,699){\vector(-1, 0){ 71}}
\put(189,699){\line( 1, 0){ 71}}
\multiput(100,780)(4.40000,0.00000){22}{\makebox(0.4444,0.6667){\small .}}
\multiput(100,740)(4.40000,0.00000){22}{\makebox(0.4444,0.6667){\small .}}
\put( 21,821){\vector( 1, 0){ 71}}
\put( 92,821){\line(-1, 0){ 71}}
\put( 95,821){\vector(-1, 0){ 2}}
\put( 95,780){\vector(-1, 0){ 2}}
\put( 95,740){\vector(-1, 0){ 2}}
\multiput(100,821)(4.40000,0.00000){22}{\makebox(0.4444,0.6667){\small .}}
\put( 92,821){\line( 0,-1){ 81}}
\put( 92,740){\line( 0, 1){ 81}}
\put( 21,740){\vector( 1, 0){ 71}}
\put( 92,740){\line(-1, 0){ 71}}
\put(  55,825){\makebox(0,0)[lb]{\raisebox{0pt}[0pt][0pt]{$\theta$ }}}
\put(  140,825){\makebox(0,0)[lb]{\raisebox{0pt}[0pt][0pt]{$\phi$ }}}
\put(  225,825){\makebox(0,0)[lb]{\raisebox{0pt}[0pt][0pt]{$\tau$ }}}
\put(  -5,768){\makebox(0,0)[lb]{\raisebox{0pt}[0pt][0pt]{\small
$P^{\theta^{-1}}$}}}
\put(-10,727){\makebox(0,0)[lb]{\raisebox{0pt}[0pt][0pt]{\small ker$\theta$ }}}
\put( 95,810){\makebox(0,0)[lb]{\raisebox{0pt}[0pt][0pt]{\small $K$}}}
\put( 95,768){\makebox(0,0)[lb]{\raisebox{0pt}[0pt][0pt]{\small $P$}}}
\put( 85,727){\makebox(0,0)[lb]{\raisebox{0pt}[0pt][0pt]{\small $\langle
1\rangle$}}}
\put(175,810){\makebox(0,0)[lb]{\raisebox{0pt}[0pt][0pt]{\small $H$}}}
\put(175,768){\makebox(0,0)[lb]{\raisebox{0pt}[0pt][0pt]{\small $\hat{P}$}}}
\put(175,727){\makebox(0,0)[lb]{\raisebox{0pt}[0pt][0pt]{\small $N$}}}
\put(180,685){\makebox(0,0)[lb]{\raisebox{0pt}[0pt][0pt]{\small $\langle
1\rangle$}}}
\put(265,810){\makebox(0,0)[lb]{\raisebox{0pt}[0pt][0pt]{\small $G$}}}
\put(265,768){\makebox(0,0)[lb]{\raisebox{0pt}[0pt][0pt]{\small 
$P^{\tau^{-1}}$}}}
\put(265,685){\makebox(0,0)[lb]{\raisebox{0pt}[0pt][0pt]{\small ker$\tau$}}}
\put(  5,810){\makebox(0,0)[lb]{\raisebox{0pt}[0pt][0pt]{\small $G$}}}
\end{picture}
\end{center}

The subgroup $N=\ker\phi$ plays a role similar to that of the subgroup
$M$ in the  $\L$-covering algorithm.  It is  the maximal \Fp$K$-module
by which $K$ can be extended so that $P$ acts trivially on $N$ and the
extension  is   an  epimorphic   image  of  $G.$   Thus   $N$  is   an
\Fp$(K/P)$-module.   If $P$ is non-trivial the extension of $K$ by $N$
has the same  generator  number as  $K,$ because,  by Burnside's Basis
Theorem (Huppert  I, Satz  3.15, 1967),  the generator number  of  the
extension of $P$ by  $N$ is already determined by the generator number
of  $P.$  If  $P$  is  non-trivial  the  module  $M$  is  the  largest
\Fp$K$-module by which $K$ can be extended such that the extension has
the  same  generator number  as  $K$  and  $P$ acts  trivially on $M.$
Therefore $N$ is a  factor module of $M.$ If $P$ is trivial and $N$ is
the largest \Fp$K$-module by which  $K$ can  be extended such that the
extension is a  homomorphic image of $G,$ it does not follow  that $N$
is isomorphic to a factor module  of $M,$ since the extension may have
a larger  generator number  than  $K.$  However,  in both cases we can
write down a finite presentation for the extension.

This presentation is obtained as follows.  Let $\pcpG$ be the supplied
consistent  power conjugate presentation for $K,$ where ${\mathcal A} = \{
a_1,\ldots, a_n \}$ and
$$\R = \{ a_i^{p_i}= v_{ii},\,
	a_k^{a_j}= v_{jk} \mid
1\le i\le n,\,1\le j<k\le n\}.$$
Let $\pcptG$  with $\tilde{\A} = \{ a_1, \ldots,  a_n, y_{1},  \ldots,
y_s \}$  be the finite presentation for $\hat{K}$ as calculated by the
$\L$-covering algorithm.   Then $G/\L_{i,j+1}(G)$  is  isomorphic to a
quotient  of $\hat{K},$ if $P$ is  nontrivial. If $P$ is trivial,  the
generator number of $G/\L_{i,j+1}(G)$ may be larger than the generator
number of  $G/\L_{i,j}(G).$ Since $G/\L_{i,j+1}(G)$  has a  consistent
power conjugate presentation refining its $\L$-series  it follows that
any   additional   generators   lie  in   $\L_{i,j}(G)/\L_{i,j+1}(G).$
Therefore $N$ is isomorphic to a quotient of the direct product $Z$ of
the  free \Fp$(K/P)$-module  $Y$ and the free \Fp$(K/P)$-module on the
additional generators.  Let $t$  be  the  number of generators  of $G$
whose images under $\theta$ are not definitions, then $t = b-d,$ where
$b$ is the number of generators of $G$ in the finite presentation  and
$d$  is the  generator number  of $K.$  Add  new generators  $\{  z_1,
\ldots, z_t \}$ to $\tilde{\A}.$ The  set of relations $\tilde{\R}$ is
modified in the following manner.
\begin{enumerate}
\renewcommand{\theenumi}{\arabic{enumi}}
\item  add to $\tilde{R}$ all relations of  the form $[  z_{i}^{},
z_{j}^g ]=1,$   $[y_k, z_j^g ] = 1$ and $[z_i, y_k^g ] = 1$
for all normal $g = w(a_1, \ldots, a_r)$  
for $ 1 \le i,j \le t$ and $1 \le k \le
m$ and  all relations $z_{i}^p  = 1$ for $1 \le i \le t;$
\item add to $\tilde{R}$ all  relations $z_{i}^{a_j}  = z_i^{}$
for  $j >  r$  for $1 \le i \le t.$
\end{enumerate}

The group $\tilde{K}$  defined by $\pcptG$ has $G/\L_{i,j+1}(G)$ as  a
factor group. It is called  the {\it extended $\L$-covering  group  of
$K$} and $\pcptG$ is  the  {\it extended $\L$-covering  presentation.}
Define  a map $\sigma$  from $\{ g_1,  \ldots, g_b  \}$ to  the  group
$\tilde{K}$  by $g_i^{\sigma} =  g_i^\theta z_{k}$  if $g_i^\theta$ is
non-defining   and  $g_i^\sigma  =  g_i^\theta$  if   $g_i^\theta$  is
defining. The map $\sigma$ is called the {\it extended\/} map.

The basic step is illustrated by  an example.  Consider  the group $G$
defined by the following finite  presentation $\{ x,  y \mid x^8, y^3,
(x^{-1}y)^2,   (yx^3yx)^2 =   x^4\}.$ Let   $\L$    be   the list   $[
(2,1),(3,1),(2,2)].$ Then it  can  be  shown that  $G/\L_{3,1}(G)$  is
isomorphic to    $S_4.$   A   labelled  consistent   power   conjugate
presentation for $S_4$ was given above.   The input for the basic step
is the finite   presentation for  $G,$ the  labelled  consistent power
conjugate  presentation  for $S_4$ and  the  epimorphism $\theta  :  G
\rightarrow G/\L_{3,1}(G)$ defined by $x \mapsto a$ and $y \mapsto b.$
The  images   of $\theta$  are    the definitions of   $a$  and   $b,$
respectively.    We  have previously   determined  a  presentation for
$\hat{S_4}.$   This  is also   the extended $\L$-covering presentation
since both images  of $\theta$ are  definitions.   The map $\sigma$ is
the map from $G$ to $\tilde{K}$ which maps $ x$ to $a$ and $y$ to $b.$
Using the map  $\sigma$ the kernel  of the homomorphism  from $Z$ onto
$N$ can be computed effectively.

\begin{theorem}
\label{computeN}
Let $T$ be the set of elements defined in Theorem \ref{consisrels}.
Let $U$ be the  set  $ \{ r_i( g_1^{\sigma}, \ldots, g_b^{\sigma})  \mid 1
\le i \le m \}$  of   elements   of $Z$  obtained  by evaluating  the
relators of $G$ in the images of the generators  of  $G$ under the map
$\sigma.$ Then $N$ is isomorphic to $Z/((T\cup U)$\Fp$(K/P)).$
\end{theorem}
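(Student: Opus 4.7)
The plan is to identify $N$ with $Z/((T \cup U)$\Fp$(K/P))$ via the maximality characterisation of $N$ stated in the paragraph preceding the theorem; write $I := (T \cup U)$\Fp$(K/P)$ for brevity. As preparation I would verify that $\tilde{K}$ itself is an extension of $K$ by the free module $Z$: the centralising and action relations added in the construction of $\pcptG$ impose no relations among $\{y_1,\ldots,y_s,z_1,\ldots,z_t\}$ beyond those required by the \Fp$(K/P)$-module structure, so the kernel of the natural projection $\tilde{K} \to K$ is the free module $Z$ on these generators. In particular, any word in $\tilde{K}$ projecting to the identity of $K$ collects to an element of $Z;$ thus $T \subseteq Y \subseteq Z$ by Theorem \ref{consisrels} and $U \subseteq Z$ because each $r_i(g_1^\sigma,\ldots,g_b^\sigma)$ projects to $r_i(g_1^\theta,\ldots,g_b^\theta) = 1$ in $K.$

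For the inclusion $I \subseteq \ker(Z \to N)$ I would argue that the natural surjection $\tilde{K} \to H$ (asserted in the discussion preceding the theorem) kills both $T$ and $U.$ It kills $T$ by the same argument as in Theorem \ref{consisrels}: the consistency test words collect to the identity in $H$ (which has a consistent power conjugate presentation) but to the corresponding elements of $T$ in $\tilde{K}.$ It kills $U$ because the $z_k$ in the construction of $\sigma$ is chosen precisely so that $\tilde{K} \to H$ sends $g_i^\sigma$ to the $\tau$-image of $g_i$ in $H,$ whence $r_i(g_1^\sigma,\ldots,g_b^\sigma)$ is mapped to $r_i(\tau(g_1),\ldots,\tau(g_b)) = 1$ in $H.$ Combined with the surjectivity of $Z \to N$ (deduced from the surjectivity of $\tilde{K} \to H$ by diagram chasing), this yields a surjection $Z/I \to N.$

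For the reverse inclusion I would set $\bar{K} := \tilde{K}/I$ and $\bar{Z} := Z/I,$ and verify that the pair $(\bar{K},\bar{Z})$ satisfies the hypotheses of the maximality characterisation of $N.$ The map $g_i \mapsto g_i^\sigma I$ extends to a homomorphism $G \to \bar{K}$ because each relator $r_i$ of $G$ evaluates into $U \subseteq I;$ the image of this homomorphism surjects onto $K$ via the natural projection $\bar{K} \to K,$ and one checks that it contains the $z_k I$ as well (each $z_k$ equals $(g_i^\theta)^{-1} g_i^\sigma$ for the associated non-defining generator, and $g_i^\theta$ lies in the image of $G$ modulo $\bar{Z}$). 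Moreover $\bar{Z}$ is an elementary abelian $p$-group on which $P$ acts trivially (inherited from $Z$), so the preimage of $P$ in $\bar{K}$ is a central extension of $P$ by an elementary abelian $p_i$-group, hence a $p_i$-group of exponent-$p_i$ class at most one more than that of $P,$ placing $\bar{K}$ in the same $\L$-class as $H.$ Maximality of $N$ then gives a surjection $N \to \bar{Z},$ which combined with the surjection $\bar{Z} \to N$ from the previous paragraph forces $\bar{Z} \cong N.$

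The step I expect to be the main obstacle is verifying that $\bar{K}$ admits a labelled consistent power conjugate presentation refining its $\L$-series, which is what legitimises invoking the maximality characterisation of $N$ in the first place. This is carried out along the lines of the proof of Theorem \ref{consisrels}: choose a vector space basis of $\bar{Z},$ write down the induced power conjugate presentation, and verify the four families of consistency equations from Theorem \ref{consis}; the role of the $T$-relations is to kill the obstructions arising from consistency test words in $\A$ alone, while the centralising and action relations on the $y_i$ and $z_j$ dispose of the test words involving module generators. The verification runs parallel to that in Theorem \ref{consisrels} but must track both families of module generators $y_i$ and $z_j$ simultaneously.
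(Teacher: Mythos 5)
Your argument is essentially the one in the paper, whose entire proof is four sentences long: form $\bar K=\tilde K/I,$ observe that it is an epimorphic image of $G$ which is an extension of $K$ by $Z/I$ of the required kind (hence an image of $G/{\mathcal L}_{i,j+1}(G)$), observe that conversely $G/{\mathcal L}_{i,j+1}(G)$ is an image of $\bar K,$ and conclude by finiteness. Your ``maximality of $N$'' phrasing of the reverse surjection is the same argument in different words, and your appeal to the consistency construction of Theorem \ref{consisrels} to see that the kernel of $\bar K\rightarrow K$ really is $Z/I$ (no further collapse) is exactly how the paper controls $\bar K.$

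Two caveats. First, your preparatory claim that the kernel of $\tilde K\rightarrow K$ is the \emph{free} module $Z$ is false: each consistency test word is a product $w\,w'^{-1}$ of two bracketings of the same word, hence already represents the identity of $\tilde K,$ so the elements of $T$ vanish in $\tilde K$ and the kernel is a quotient of $Z/\langle T\rangle,$ not $Z$ itself unless $\langle T\rangle$ is trivial. This is harmless because you never use freeness --- that $T$ and $U$ consist of words in $Z$ follows from Lemma \ref{normalform} alone. Second, the step that actually carries the reverse inclusion is the claim that $\bar K$ is generated by $g_1^\sigma,\ldots,g_b^\sigma.$ Your parenthetical justification only shows that each $z_k$ lies in $D\cdot\bar Z$ where $D=\langle g_1^\sigma,\ldots,g_b^\sigma\rangle$ and $\bar Z$ is the image of $Z,$ which is vacuous; if $D$ were a proper subgroup, maximality of $N$ would only yield a surjection of $N$ onto $D\cap\bar Z,$ and your two inequalities would not close. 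The paper asserts this generation statement without proof as well, so you match its level of detail, but a complete argument must trace through the labelled definitions (every $y_t$ lies in $\langle a_1,\ldots,a_n\rangle$ via the relation it decorates, and every $a_i$ is recovered from the $g_i^\sigma$ and the module part via its definition) to conclude $D=\bar K.$
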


\begin{proof}
Consider the factor group $H$ of $\tilde{K}$ obtained by extending the
group $K$  by  $Z/((T\cup  U)$\Fp$(K/P)).$  Then  $H$  is generated by
$g_1^\sigma,  \ldots, g_b^\sigma.$ Since the  relations of $G$ hold in
$H$ it follows that  it is a homomorphic image of  $G/\L_{i,j}(G).$ By
construction $H$ has $G/\L_{i,j}(G)$ as a homomorphic image, hence $H$
is isomorphic to $G/\L_{i,j}(G).$
\end{proof}

In our example  $U$ is the set 
$\{ y_1^b y_3^{(bab)} y_4^b y_5^b y_7^{(1 +  b)} \}.$

The vector enumerator was used to compute a vector space basis for the
module  $M$ in the $\L$-covering  algorithm.  Here it is  employed  to
compute  a  vector  space  basis   for  the  module  $N   =  Z/((T\cup
U)$\Fp$(K/P)).$ It takes as input
\begin{enumerate}
\renewcommand{\theenumi}{\arabic{enumi}}
\item a  consistent power-conjugate presentation  for $K;$ 
\item the  set of generators  for $Z;$ 
\item the  set of relations $T  \cup  U.$ 
\end{enumerate}

\noindent The output  is 
\begin{enumerate}
\renewcommand{\theenumi}{\arabic{enumi}}
\item an  \Fp-basis  for $N;$  
\item an expression in this basis for the  image  under the
generators of $K/P$ of every   basis element;   
\item expressions for  the   images  of  the
\Fp$(K/P)$-generators  of $Z$ in  terms of  the basis elements.    
\end{enumerate}

\noindent This output is used  to obtain  a consistent power conjugate
presentation for  the extension  $H$ of  $K$  by  $N,$ an  epimorphism
$\tau$ from $G$ to $H$ and an  epimorphism $\phi$ from $H$ to $K.$ The
method for constructing the consistent power conjugate presentation is
again the method described in  the  proof of Theorem \ref{consisrels}.
The homomorphism $\tau$ from $G$  to $H$  is obtained by replacing the
elements  $z_i$ in the map $\sigma$  by the corresponding word in  the
basis for $N.$  This yields an epimorphism by  Theorem \ref{computeN}.
As pointed out earlier a base change for the vector space basis of the
module $N$ may be  necessary  in order to obtain a labelled consistent
power conjugate  presentation  for  $H.$ A  base  change  may  also be
necessary in  order to transform  $\tau$ into a labelled homomorphism.
The map $\tau$  is an epimorphism, since all the generators of $H$ are
either  defined as images of the generators of  $G$ under $\tau$ or by
definitions in the power conjugate  presentation of  $H$ on the images
of those generators.

The  vector enumerator is employed to compute a vector space basis for
the module  $Z/((T\cup  U)$\F{2}$(K/P))$  in the previous example. The
vector enumerator  returns  the basis  $\{ e, f, g \}$ defined by $e =
y_3,$ $f =  y_4$ and $g = y_5.$ Again, the vector enumerator gives the
action of $a$ and $b$ on this basis, while $c$  and $d$ act trivially.
The  information  is   used   to  construct  for  the  quotient  $H  =
G/\L_{4,0}(G)$  the  following  labelled  consistent  power  conjugate
presentation:

{\small
$$\begin{tabular}{llllllll}
\multicolumn{8}{l}{$\{\, a,\, b,\, c,\, d,\, e,\, f,\, g\, \mid $}\\
&$a^2=: c,$\\
&$b^{a}=\hphantom{:} b^2 c,$&$ b^3,$\\
&$c^{a}=\hphantom{:}c,$&$ c^b=:d,$&$c^2=:e,$\\
&$d^{a}=:cdf,$&$d^b=:{cdg},$&${d}^{c}=\hphantom{:}dfg,$&$d^2= ef,$\\
&$e^{a}=\hphantom{:} e,$&${e}^{b}=\hphantom{:}{ef},$&
${e}^{c}=\hphantom{:}e,$&$e^d= e,$&$ e^2,$\\
&$f^{a}=\hphantom{:} eg,$&${f}^{b}=\hphantom{:}efg,$&
$f^c=\hphantom{:}f,$&$f^d=f,$& $f^e= f,$ & $f^2,$ \\
&$g^{a}=\hphantom{:} ef,$&$g^b=\hphantom{:}e,$&
$g^c=\hphantom{:}g,$&$g^d=g,$& $g^e =g,$ &$ g^f=g,$&$ g^2\,\}$\\
\end{tabular}
$$
}

\noindent  and   the  labelled  epimorphism   $\tau$  from  $G$  onto
$G/\L_{4,0}(G)$  defined by  $x \mapsto a$ and $y \mapsto b.$ Hence in
this example  $G$ has a homomorphic image isomorphic to a factor group
of $\hat{S_4}.$ In fact $G$ is an extension of $S_4$ by a group $N$ of
order $2^3.$ The group $N$ is generated by $\{ c^2, d^2, [d,c] \}$ and
therefore $\L^{-2}(H)$ is isomorphic to the  $2$-covering group of the
Klein-4 group.

\section{Practical aspects}

This section focuses  on practical aspects of our algorithm.  First we
highlight  some  features  of  an   implementation. 

\subsection{The implementation}

The soluble quotient algorithm  as  described  in  Section 5 has  been
implemented in {\bf C}  and this first implementation  is known as the
ANU Soluble Quotient  Program  (SQ).   It  uses Version 3  of Linton's
implementation of a vector  enumerator  (see Linton 1991 and 1993) and
Nickel's  parser for finite  presentations as  used  in  his Nilpotent
Quotient Program, (see Nickel 1992).

The  basic step of the algorithm first computes a finite  presentation
$\pcptG$  for  the  extended $\L$-covering group  $\tilde{K}$  of  $K$
(given by the power conjugate presentation $\pcpG$), where $\tilde{\A}
=  \A  \cup \{ y_1, \ldots, y_s \}  \cup \{  z_1, \ldots, z_t \}.$ The
number $s+t$ of extra generators for $\tilde{\A}$ has a serious impact
on the performance of the algorithm.  Related quotient algorithms face
the same problem. Methods to reduce the number  of extra generators in
the  context of a  $p$-quotient algorithm  and  a  nilpotent  quotient
algorithm are described in Celler et  al.\ (1993) and Nickel (1993 and
submitted), respectively.  Similar ideas  can be used in our algorithm
when repeatedly computing $\L$-covering groups for a fixed  prime $p.$
These features can be invoked by an option  given  to  the program (SQ
1).

Theorem  6 uses  the set of normal words $T\cup U.$ Recall that a word
in  $\tilde{\A}$  is  normal  if it  is  of  the form  $w(a_1, \ldots,
a_n)\cdot  \Pi_{i=1}^s  y_i^{f_i},$  where  $f_i$  is  an  element  of
\Fp$(K/P).$ We  implement  the steps  referred  to as  ``collection in
$\hat{K}$'' in  Section 4.2 following  the strategy of collection from
the left (see Leedham-Green and Soicher, 1990).  In representing $f_i$
one  may  write it  as a  sum over \Fp\ of either  arbitrary or normal
words in the generators of the consistent power conjugate presentation
of  $K/P.$  This  can  again be determined by  an option given to  the
program  (SQ  2).   The influence  of the  options is  demonstrated in
Section 6.4.

\subsection{The test examples}

We highlight  certain  aspects  of  the  behaviour  of  SQ  using  the
following example presentations.

\begin{center}
{\small
\renewcommand{\arraystretch}{1.2}
\begin{tabular}{ll}
\vphantom{$P^1_1$}$P_1$ &
$\{ a,b\mid (ab)^2b^{-6},\, a^4b^{-1}ab^{-9}a^{-1}b \},$ \\
\vphantom{$P^1_1$}$P_2 $ &
$\{ a,b\mid a^2ba^{-1}ba^{-1}b^{-1}ab^{-2},\, a^2b^{-1}ab^{-1}aba^{-1}b^2\},$\\
\vphantom{$P^1_1$}$P_3$ &
$\{ a,b\mid ab^2(ab^{-1})^2,\, (a^2b)^2a^{-1}ba^2(bab)^{-1} \},$\\
\vphantom{$P^1_1$}$P_4$ &
  $\{ a,b\mid ab^2a^{-1}b^{-1}ab^3,\, ba^2b^{-1}a^{-1}ba^3 \},$\\
\vphantom{$P^1_1$}$P_5$ &  $\{ a,b\mid ab^3a^{-1}b^{-1}ab^3,\,
ba^3b^{-1}a^{-1}ba^3\},$\hphantom{$abababababaabababababaaaaaaaaaaaaaaaaaa$}\\ 
\end{tabular}
}
\end{center}

\begin{center}
{\small
\renewcommand{\arraystretch}{1.2}
\begin{tabular}{ll}
\vphantom{$P^1_1$}$P_6$ & $\{ a,b\mid ab^2a^{-1}b^{-1}a^3b^{-1},\,
  ba^2b^{-1}a^{-1}b^3a^{-1}\},$ \\
\vphantom{$P^1_1$}$P_7$ &
$\{ a,b\mid (ab)^3b^6,\, aba^{-1}bab^3aba^{-1}bab^{-1},\,
		 a^2b^{-2}aba^{-1}b^3ab^{-1}a^{-1}b^2 \},$\\
\vphantom{$P^1_1$}$P_8$ & $\{ a, b, c, d, e \mid
a^{-1}cd^{-1}e(dc)^{-1}e(da)^{-1}ec^{-1}be(adc)^{-1}eb^{-1}c,$\\
&$\hphantom{\{}(adc)^{-1}\!e(ab)^{-1}\!c(be)^{-1}\! c d^{-1}\!e (adc)^{-1}\! e
(ab)^{-1}\! 
d^{-2} ea e^{-1}\!a dc e^{-1}\!d c^{-1}\!a e,$\\
&$\hphantom{\{}acdce^{-1}\!de^{-1}\!da(ae)^{-1}\!e(ea)^{-1}\!d^2ab
(ea)^{-1}\!d^2abe^{-1}\! 
adabe^{-1}\!adce^{-2}\!d, $\\
&$\hphantom{\{} dc(dabe)^{-1}\!ed^{-1}\!e(adc)^{-1}\!e(ea)^{-1}\!
d^2ace(adc)^{-1}\!e(ab)^{-1}\!c(be)^{-1}\!c d^{-1}\!ec^{-1}\!ae^{-1}\!a, $\\
&$\hphantom{\{} c^{-1}\!e(adc)^{-1}\!e(adab)^{-1}\!e(ea)^{-1}\!dacd^{-1}\!
ec^{-1}\!a bce^{-1}\!da(ae)^{-1}\!e(ea)^{-1}\!d^2abe^{-1}\!ad, $\\
&$\hphantom{\{} ce^{-1}\!de^{-1}\!da(ae)^{-1}\!e(ea)^{-1}\!d^2abe^{-1}\!
adabce^{-1}\!da(ae)^{-1}\! e(ea)^{-1}\!d^2abe^{-1}\!ad^2, $\\
&$\hphantom{\{} ae(adc)^{-1}\!e(adab)^{-1}\!e(ea)^{-1}\!da^2c^{-1}\!b
(eda)^{-1}\!da(ae)^{-1}\!e(ea)^{-1}\!d^2abe^{-1}\!adce^{-2}\!d, $\\
&$\hphantom{\{}(adc)^{-1}\!e(ea)^{-1}\!d^2\!ab(e^{-1}\!a)^2\!dabe^{-1}\!adc
(ce)^{-1}\!ae(adc)^{-1}\!e(ab)^{-1}\!c(be)^{-1}\!c(bd)^{-1}\!cd^{-1}\!e,$ \\
&$\hphantom{\{}(adc)^{-1}\!e(ea)^{-1}\!de(adc)^{-1}\!e(adab)^{-1}\!
ece^{-1}\!d[a^{-1}\!\!\!\!,e](ea)^{-1}\!d(dabe^{-1}\!a)^2\!dc
(be)^{-1}\!cd^{-1}\!e \},$\\
\vphantom{$P^1_1$}$P_9$ & $\{ a,b \mid a^3, b^3, (ab)^6, (a^{-1}b)^6 \},$\\
\vphantom{$P^1_1$}$P_{10}$ & $\{ a,b \mid a^3, b^6, (ab)^6, (a^{-1}b)^6 \},$\\
\vphantom{$P^1_1$}$P_{11}$ & $\{ a,b \mid a^6, b^6, (ab)^6, (a^{-1}b)^6\},$\\
\vphantom{$P^1_1$}$P_{12}$ & $\{  a,\, b,\, c \mid   a^2, b^{31}, c^5,$
$c^{-1}bcb^-2, c^{-1}acb^{-1}ab,abac^{-1}acab^{-1},$\\
&$\hphantom{\{} c^{-1}ac^2ab^2ac^{-1}b^{-1},$ $(b^{-1}ab^2ab^{-1})^2,$
$c^{-1}b^{-2}cab^3abbab^{-1} \},$ \\
$P_{13}$ &  $\{  a, b, c\mid 
a^2,  b^{127}\!,  c^7, c^{-1}bcb^{-2}, 
b^{-1}abc^{-1}bab^{-1}c, (a^ca)^2, (aa^{b^{-1}})^2,$
$    ac^{-1}acb^{-1}cac^{-1}b, $\\
&$\hphantom{\{}ab^{-1}c^{-1}b^{-1}ab^2cab^{-1},$
$(ac^{-1}b^{-1}abc)^2,$
$(bab^{-1}c^{-1}ac)^2,$
$(b^{-1}ab^2ab^{-1})^2\},$\\
\vphantom{$P^1_1$}$P_{14}$ & $\{  a,\, b,\, c,\, d,\, e,\, f \mid 
a^2,\, b^2,\, c^2,\, d^2,$ $ e^{15},\, f^4, $
$[b, a],\, [c, a],\, [d, a],\, [c, b],\, [d, b],\, [d, c],$\\
& $\hphantom{\{}  f^{-1}efe^{-2},$
$e^{-1}\!aeb^{-1},$
$e^{-1}\!bec^{-1},$
$e^{-1}\!ced^{-1},$
$e^{-1}\!deb^{-1}a^{-1}, $
$f^{-1}\!afa^{-1}, $
$f^{-1}\!bfc^{-1},$ \\
&$ \hphantom{\{} f^{-1}\!cfb^{-1}\!a^{-1},$
$f^{-1}\!dfd^{-1}\!c^{-1} \}.$\\
\end{tabular}
}
\end{center}

The  first seven presentations appear  in Wegner (1992); presentations
$P_1,$ $P_2,$ and $P_3$ were constructed  by Kenne  (1990) and  $P_4,$
$P_5$ and $P_6$ appear  in Campbell (1975).  Presentation  $P_8$ arose
during Neub\"user's study of the Heineken group (Neub\"user and Sidki,
1988).   Presentations  $P_9,$ $P_{10},$ and $P_{11}$ were constructed
in  the  study  of  $B(2,6)$  (see  Havas,  Newman  and  Niemeyer  (in
preparation)).   Presentations  $P_{12},$ $P_{13}$ and $P_{14}$ define
soluble groups of derived length 3.  They arise as semidirect products
of the multiplicative and additive groups of finite fields extended by
the Frobenius automorphism.  They have been suggested by  Pasechnik.

\subsection{The {$\L$}-series of the test examples}

An  important problem in applying the soluble quotient algorithm is to
choose an appropriate  $\L$-series to determine  a factor group  of  a
finitely  presented  group.  The  program Quotpic, by  Holt  and  Rees
(1992), has  proved to be  valuable in solving  this problem. It is an
interactive,  graphical tool  which, among other things,  incorporates
very  practical routines for manipulating  presentations  and provides
access to  various  other programs.  It  was  employed to compute  the
$\L$-series for some of the test examples.

One possible method of determining an $\L$-series for a finite soluble
quotient  of  a  finitely  presented group $G$  using Quotpic  is  the
following.  A prime $p$ in the list  of  abelian  invariants of $G$ is
chosen and the ANU $p$-Quotient  Program (see Newman and  O'Brien,  in
preparation) is called to compute a power conjugate presentation for a
$p$-quotient  $G/K$ of desired exponent $p$-class.  A presentation for
$K$  can  be  obtained by  Reidemeister-Schreier rewriting.   Now  the
process can be repeated with $K$ taking the role of $G.$

Obviously it is only possible to compute presentations  for the kernel
if the quotient of the  group  over the kernel is  small.  This is the
case  in presentations $P_8,$ $P_9,$ $P_{10}$ and $P_{11},$ where  the
3-quotient has a kernel of index at most $3^2$ in the group defined by
the presentation.  For presentations $P_{12},$  $P_{13}$  and $P_{14}$
the $\L$-series were known beforehand.

\subsection{Performance}

Table 1 gives the CPU time in seconds taken  on a DEC  3000/600 AXP
(with 96Mb RAM) to
compute  consistent  power   conjugate   presentations  for  the  test
examples.   For  each  presentation  we  list  the  $\L$-series   that
determines the finite soluble quotient computed and its order.

\begin{center}
{{\bf Table 1.} Quotient group specifications and run times}
{\small
\renewcommand{\tabcolsep}{4.0pt}
\begin{tabular}{rrlrrrrr}
\hline \hline
 $P$& Order & $\L$-series & SQ  &  SQ 1  & SQ 2 \\
\hline
1&$2^4\!\cdot\! 3^4$&{ [(2,1),(3,1),(2,2),(3,2)]}&
2.1&7.6&4.0\\
 2&$2^5\!\cdot\! 3\cdot\! 5^2 $&{ [(2,1),(3,1),(2,2),(5,1)]}&  
2.9&29.6&8.7\\
 3&$2^3\!\cdot\! 3\!\cdot\! 5^3$&{ [(3,1),(2,2),(5,2)]}& 
4.1&4.0&2.4\\
4&$2^8\!\cdot\! 3^3$&{ [(3,1),(2,2),(5,1),(11,1)]}& 
27.9&129.6&60.0\\
 5&$2^3\!\cdot\! 3^6$&{ [(2,3),(3,2)]}&
0.5&0.4&0.4\\
 6& $2^3\!\cdot\! 3\cdot\! 5\!\cdot\! 11$&{ [(3,2),(2,2)]}&
0.6&0.5&0.6\\
 7& $2^{9}\!\cdot\! 3^4$&{ [(3,1),(2,2),(3,2),(2,1)]}&
$+1220$&$+1260$&$+1411$\\
 8& $2^{26}\!\cdot\! 3$&{ [(3,1),(2,11)]}& 
988.3&348.3&1021.1\\
 9& $2^{252}\!\cdot\! 3^3$&{ [(3,2),(2,3)]}&
1453.5&347.7&2646.2\\
10&$2^{182}\!\cdot\! 3^3$&{ [(3,2),(2,2)]}&
77.8&58.3&81.6\\
 11&$2^{398}\!\cdot\! 3^3$&{ [(3,2),(2,2)]}&
473.1&336.6&675.2\\
 12& $2^5\!\cdot\! 5\!\cdot\! 31$&{ [(5,1),(31,1),(2,1)]}&
0.3&0.3&0.4\\
 13&$2^7\!\cdot\! 7\!\cdot\! 127$&{ [ (7,1), (127,1), (2,1) ]}&
31.5&31.4&36.4\\
14&$2^6\!\cdot\! 3\!\cdot\! 5$&{ [(2,2),(3,1),(5,1),(2,1),(3,1)]}&
0.6&0.6&0.7\\
\hline \hline
\end{tabular}
}\end{center}

Some entries contain the symbol '$+$'. In these cases the computations
were not  completed.  The vector enumerator invoked by SQ, SQ 1 and SQ
2 ran out of space after the time listed for presentation $P_7.$ 

Some remarks on the space requirements of SQ and the vector enumerator
are  in  order.  SQ  1  allocated  less  than  1  Mb  for  almost  all
presentations; it needed 1.19 Mb for $P_{10},$ 2.6 Mb for $P_{9},$ and
4.8 Mb  for $P_{11}.$ These  space requirements are for  the SQ 1 only
and do not list the requirements for the vector enumerator. The vector
enumerator, for example,  allocated 3.6 Mb for the computation of  the
module of dimension 163 in $P_{10}$ when called from SQ 1, 54.8 Mb for
the module of dimension 205 in $P_{9}$ when called from SQ 2, but only
9.7 Mb when called from SQ 1.

\subsection{Comparisons}

Let  us first compare the different  versions of our algorithm.  SQ  1
performs  best  when the  presentation describes  a  group with  large
$p$-quotients;  for example  for presentations $P_8,$ $P_9,$  $P_{10}$
and $P_{11}.$ If  the  order of  the computed  quotient  involves many
different primes,  and  each occurs  to a small power, SQ  appears  to
perform better; for example for  presentations $P_1,$ $P_2$ and $P_4.$
This shows that the number of extra generators has an influence on the
performance  of  the  vector  enumerator.   When repeatedly  computing
$\L$-covering  groups for a fixed prime the presentations generated by
SQ 1 seem  to  be more suitable for  the vector  enumerator than those
computed by SQ.  However, in examples where SQ 1 performs worse than SQ
it spends additional time in  the  vector  enumerator.  This indicates
that the time is not lost in the methods to reduce the number of extra
generators,  but that the  presentations computed  are  worse for  the
vector enumerator.

The performance of SQ 2 seems to lie between that of SQ and SQ 1.  An
interesting  feature is  that  where it  performs slower  than SQ  the
additional  time  is  spent  in  the  vector  enumerator  and  not  in
collection.  Thus handing the vector enumerator  normal words does not
necessary seem to be better.

If  one wants to  show that a finitely presented  group with a soluble
quotient $G$ does not have  a  larger soluble quotient as an extension
of $G$  by a  module $N$ over \Fp, then the theoretical  bound on  the
dimension  of  $N$ is  generally very  large.   Presentation  $P_{14}$
describes  a group  $G$ of order $2^6 \cdot 3 \cdot 5.$ Hence  it does
not describe an extension of $G$ by a module over \F{3}.  Here  SQ can
prove that this is the  case  in $0.6$ seconds.

\subsection{Conclusions}

The performance of the vector  enumerator  is critical for the soluble
quotient  algorithm presented  here.   As  pointed  out  earlier,  its
performance depends on the module presentations supplied as input.  It
seems promising to investigate  more  thoroughly  how  a  presentation
should be supplied  as  input.  Further  it seems promising to develop
methods  for  reducing   the  number   of  generators  in  the  module
presentations.   For example, Leedham-Green (1984) has suggested using
special  power conjugate presentations.  Recently  various  aspects of
these  power conjugate presentations were investigated  by Eick (1993)
and  Cannon  and  Leedham-Green   (in   preparation).   Special  power
conjugate  presentations  can  be  used to  reduce the  number of  new
generators if  the soluble quotient is calculated by computing maximal
nilpotent factors.  These suggestions need to be tested. In fact, when
extending a soluble group by  an elementary  abelian  $p$-group, it is
not necessary  to introduce new  generators  for those relations whose
left hand sides only involve generators  whose power relations involve
a  prime not equal to $p.$ In that  case,  however, one has to add new
generators to all images of  the epimorphism of the finitely presented
group  onto  the  soluble  quotient   so  far  computed  and  use  Fox
derivatives,  see  Leedham-Green  (1984).   The  method  he  describes
computes  a module larger  than  the  one  required.   In  fact,  when
extending a soluble group $K$ this method computes  a module which has
a direct summand  $\hbox{\Z}_p^{|K|-1},$  which  seems  difficult  for
large $K.$

Verifying that a certain quotient group  of a finitely presented group
is indeed the largest  finite soluble  quotient  seems  to  be  a very
difficult  problem in practice.  Leedham-Green (1984)  suggested using
an integer  vector  enumerator to  determine  the  primes in the  next
nilpotent  factor.   Linton  has  written  a  version  of  the  vector
enumerator  that   works  over  the  integers.   Plesken  (1987)  also
describes a method to determine the primes that can occur in a soluble
quotient  of a finitely  presented group. Again,  these ideas  need to
tested.

\subsection{The Code} 

The  ANU  Solvable Quotient  Program  (Version  1.0) is  available via
email from the author ({\tt alice@maths.uwa.edu.eu}),  by
anonymous  {\tt ftp}  from  {\tt  maths.anu.edu.au}  in the  directory
{\tt pub/SQ}, and as a share library with \GAP\ 3.4. It might also
become available as part of {\sc Magma.}

\medskip

\noindent
{\sc Acknowledgments}

I thank my PhD supervisor Dr M.F.\ Newman for his generous support and
assistance; I thank Dr L.G.  Kov{\'a}cs,  Dr  C.R.\  Leedham-Green, Dr
Werner Nickel and  Dr E.A.\ O'Brien  for many  encouraging discussions
and generous  help.  I  thank  Dmitrii  V.\ Pasechnik for many helpful
discussions on the last section.

I  acknowledge  the  support of an OPRSA and  an  ANU PhD  scholarship
during  which this  work  was carried  out.  Part of  the  writing was
supported by ARC Grant A69230241.

\end{document}